\tikzset{baseline={([yshift=-.5ex]current bounding box.center)}}
\tikzstyle directed=[blue,postaction={decorate,decoration={markings, mark= at position .55 with {\arrow{stealth}}}}]
\tikzstyle{v} = [draw,black,fill,circle, inner sep=1pt]
\tikzstyle{->}=[-{Stealth[]}]
\tikzstyle{<->}=[{Stealth[]}-{Stealth[]}]
\tikzstyle{<-}=[{Stealth[]}-]
\newtheorem{theorem}{Theorem}[section]
\newtheorem{lemma}[theorem]{Lemma}
\newtheorem{corollary}[theorem]{Corollary}
\newtheorem{proposition}[theorem]{Proposition}
\theoremstyle{definition}
\newtheorem{definition}[theorem]{Definition}
\newtheorem{remark}[theorem]{Remark}
\newtheorem{example}[theorem]{Example}
\def\S{{\mathbb S}}
\newcommand{\cA}{{\mathcal{A}}}
\newcommand{\cB}{{\mathcal{B}}}
\newcommand{\fgl}{{\mathfrak{gl}}}
\newcommand{\fso}{{\mathfrak{so}}}
\title{Chromatic polynomial and the $\fso$ weight system}
\author{S. Lando\thanks{Higher School of Economics, Skolkovo Institute of Science and Technology
%partially supported by RSF Grant, Project 23-11-00150.
}, Z. Yang\thanks{Beijing Institute of Mathematical Sciences and Applications}}
\date{2024}
\begin{document}

\maketitle

\begin{abstract}    
    In a recent paper by M.~Kazarian and the second author, a recurrence for the Lie algebras~$\fso(N)$ weight systems has been suggested; the recurrence allows one to construct the universal $\fso$ weight system. 
    The construction is based on an extension of the $\fso$ weight systems to permutations. 
    Another recent paper, by M.~Kazarian, N.~Kodaneva, and the first author,
    shows that under the substitution $C_m=xN^{m-1}, m=1,2,\dots,$ 
    for the Casimir elements~$C_m$, 
    the leading term in $N$ of the value of the universal $\fgl$ weight system becomes the chromatic polynomial of the intersection graph of the chord diagram. 
    In the present paper, we establish a similar result for the universal $\fso$ weight system.
    That is, we show that the leading term of the universal $\fso$ weight system also becomes the chromatic polynomial under a specific substitution.
\end{abstract}

\begin{tabular}{rl}

$\fgl(N)$ & Lie algebra of all $N\times N$ matrices \\
$E_{ij}$ & matrix unit, $1\le i,j\le N$\\
$w_\fgl$ & weight system associated to the series $\fgl$ of Lie algebras\\
$U\fgl(\infty)$ & universal enveloping algebra of the limit Lie algebra $\fgl(\infty)$\\
$ZU\fgl(N)$ & center of $U\fgl(N)$\\
$\fso(N)$ & Lie algebra of special orthogonal $N\times N$ matrices \\
%$E_{i,j}\in\fgl(N)$ & matrix unit, $1\le i,j\le N$\\
$w_\fso$ & weight system associated to the series $\fso$ of Lie algebras\\
$U\fso(\infty)$ & universal enveloping algebra of the limit Lie algebra $\fso(\infty)$\\
$ZU\fso(N)$ & center of $U\fso(N)$\\
$C_1,C_2,\dots$ & Casimir elements generating the center of~$U\fgl(\infty)$ \\
$S_2,S_4,\dots$ & Casimir elements generating the center of~$U\fso(\infty)$ \\
$\alpha\in\S_m$ & permutation, which determines the hyperedges of the hypermap\\
$\S_m$ & group of permutations of $m$ elements\\
%$G(\alpha)$ & the digraph of a permutation $\alpha$\\
$f(\alpha)$ & number of faces of~$\alpha$\\
$c(\alpha)$ & number of cycles in~$\alpha$\\
$V(\alpha)$ & set of cycles of $\alpha$\\
$\ell(\alpha)$ & the sum of the lengths of all negative cycles in~$\alpha$\\
$d(\alpha)$ & the number of length two orbits ('chords') in~$\alpha$\\
$v_1, v_2, \ldots\in V(\alpha)$  & cycles of $\alpha$\\
$a(\alpha)$ & number of ascents of $\alpha$\\
$\alpha|_U$ & restriction of~$\alpha$ to $U\subset\{1,2,\ldots m\}$\\
%$\gamma(\alpha)$ & the intersection graph of~$\alpha$\\
%$\gamma^w(\alpha)$ & the weighted intersection graph of~$\alpha$\\

$\sigma=(1,2,\dots,m)\in\S_m$ & standard long cycle, the hypervertex of a hypermap\\
%$\varphi\in\S_m$ & permutation, which determines the hypefaces of the hypermap\\

$\cA_m$ & the vector space spanned by equivalence classes of permutations\\ &of $m$ elements\\
$[\alpha]$ & equivalence class of $\alpha$\\
$\cA$ & the rotational Hopf algebra of permutations\\
$\alpha_1 \# \alpha_2$ & concatenation product of permutations $\alpha_1$ and $\alpha_2$\\
$\mu(\alpha)$ & coproduct of $\alpha$\\
$\bar\alpha$ & canonical form of $\alpha$\\
%$P$ & subspace of primitive elements of a Hopf algebra\\
%$\pi$ & projection to the subspace of primitive elements\\

$\cA_m^+$ & the vector space spanned by equivalence classes\\& of positive permutations of $m$ elements\\
$\cA^+$ & the rotational Hopf algebra of positive permutations\\
$\cA^-$ & the rotational Hopf algebra of negetive permutations\\
$\cA^\pm$ & the rotational Hopf algebra of monotone permutations\\

$\cB_m$ & the vector space spanned by equivalence classes\\& of involutions of $m$ elements without fixed points\\
$\cB$ & the rotational Hopf algebra of chord diagrams\\

%$\cG_n$ & the vector space spanned by graphs with~$n$ vertices\\
%$\cG$ & the Hopf algebra of graphs\\

%%$D$ & Chord diagram\\

$G$ & simple graph\\
$K_n$ & complete graph with $n$ vertices\\
$V(G)$ & the set of vertices of~$G$\\
$G|_U$  & the subgraph of $G$ induced by~$U\subset V(G)$\\

$\chi_G(x)$ & chromatic polynomial of~$G$\\

$X(\alpha)$ & chromatic substitution of $\fgl$ weight system\\
$X_0(\alpha)$ & top coefficient of $X(\alpha)$\\
$Y(\alpha)$ & chromatic substitution of $\fso$ weight system\\
$\hat{Y}(\alpha)$ & simplified prechromatic substitution of $\fso$ weight system\\
$Y_0(\alpha)$ & top coefficient of $Y(\alpha)$\\

\end{tabular}

\newpage
\section{Introduction}
Weight systems are functions on chord diagrams
satisfying the so-called Vassiliev $4$-term relations.
They are closely related to finite-type knot invariants;
see~\cite{V90,K93}. 

Certain weight systems can be derived
from graph invariants; see a recent account in~\cite{KL22}.
One of the first examples of such an invariant was the chromatic
polynomial~\cite{CDL1}.

Another main source of weight systems are Lie algebras,
the construction due to D.~Bar-Natan~\cite{BN95} 
and M.~Kontsevich~\cite{K93}.
In the papers~\cite{ZY22,KL22}, an idea due to M.~Kazarian was used to define a
universal $\fgl$ weight system, which takes values in
the ring $\mathbb{C}[N,C_1,C_2,C_3,\dots]$ of polynomials in
infinitely many variables. The construction makes use of an extension
of the $\fgl(N)$ weight systems, $N=1,2,\dots$, to permutations,
which coincides with the
$\fgl(N)$ weight system on chord diagrams (the latter being understood as involutions
without fixed points). 

In paper~\cite{KKL24}, it is shown that the leading term in $N$ 
of the value of the universal $\fgl$ weight system
under the substitution $C_k=xN^{k-1}$ becomes the chromatic polynomial
of the intersection graph of the chord diagram. Since the substitution
$C_k=N^{k-1}$, $k=1,2,\dots$, is the specification of the $\fgl(N)$ Lie
algebra weight systems in the standard representation, the substitution $C_k=xN^{k-1}$
can be considered as a perturbation of the latter.
In the present paper, 
we show that the leading term of the universal $\fso$ 
weight system under a specific substitution is also the chromatic polynomial.
Like the substitution in~\cite{KKL24}, ours is a perturbation
of the standard representation of the Lie algebras $\fso(N)$.

The result of~\cite{KKL24} is valid not only for chord diagrams, but also for
more general permutations, which are called positive permutations.
In the case of the universal $\fso$ weight system, we show that a similar assertion is
valid not only for positive permutations, but for a wider class of monotone permutations.

\section{Construction of the universal $\fso$ weight system}

\subsection{Definition of $w_\fso$}

Following~\cite{KY23}, we define in this section a multiplicative weight system $w_\fso$ taking values in the ring of polynomials in the generators $S_0,S_2,S_4,S_6,\dots$ labeled by even non-negative integers. Similarly to the case of the $w_\fgl$ weight system, the definition of $w_\fso$ 
proceeds by extending it to the set of permutations (on any number of permuted elements). This extension is defined by a set of relations that are close to those of the $w_\fgl$ case. The defining relations presented below are motivated by the requirement that for the Lie algebra $\fso(N)$ (with $N$ of any parity) the specialization of $w_\fso$ taking $S_k$ to the corresponding  Casimir element is given by
 \begin{equation}\label{eq:wsonsigma}
	w_{\fso(N)}(\alpha)=\sum_{i_1,\cdots,i_m=1}^N X_{i_1i_{\alpha(1)}}X_{i_2i_{\alpha(2)}}\cdots X_{i_mi_{\alpha(m)}}\in U\fso(N),
\end{equation}
where $X_{ij}$ are the standard generators of $\fso(N)$:
$$
X_{ij}=E_{ij}-E_{\bar j\bar i},\qquad \bar i=N+1-i.
$$
Below, we will call the elements $\{1,2,\dots,m\}$ permutated by a permutation~$\alpha$
the \emph{legs} of the permutation.

%In the next section, we show that the weight systems associated with the Lie algebras $\fsp(2M)$ as well as the Lie superalgebras $\fosp(N|2M)$ are also specializations of the universal weight system $w_\fso$.
 
Before formulating defining relations for $w_\fso$, we introduce some notation used in these relations. For $w_\fgl$, the defining relations
are naturally expressed in terms of digraphs of permutations~\cite{ZY22}.
Such a digraph just shows where each permuted element is being taken.

The invariant $w_\fso$ constructed below possesses an additional symmetry that does not hold for the $\fgl$ case:
\begin{quote}
\emph{assume that the permutation $\alpha'$ is obtained from $\alpha$ by  inverting one of its disjoint cycles. In this case, the values $w_\fso(\alpha)$ and $w_\fso(\alpha')$ differ by the sign factor $(-1)^\ell$ where $\ell$ is the length of the cycle.}
    \end{quote}

This symmetry leads to the following convention. Along with the digraphs of permutations, we consider more general graphs, which we call extended permutation graphs.

\begin{definition} An \emph{extended permutation graph} is a graph with the following properties:
\begin{itemize}
\item the set of vertices of the graph is linearly ordered, which is depicted by placing them on an additional oriented line in the order compatible with the orientation of that line;
\item each vertex has valency~$2$, in particular, the number of edges is equal to the number of vertices;
\item for each half-edge it is specified whether it is a \emph{head} (marked with an arrow) or a \emph{tail}. For two half-edges adjacent to every vertex, one of them is a tail, and the other is a head. However, we allow the edges of the graph to have two heads, or two tails, or a head and a tail.
\end{itemize}
\end{definition}

In an ordinary permutation graph, each edge has one tail and one head. 
We extend $w_\fso$ to 
extended permutation graphs by the following additional relation: 
a change of the tail and the head for the two
half-edges adjacent to any vertex results in the multiplication 
of the value of the invariant by $-1$:
$$
w_\fso\left(~
\begin{tikzpicture}[scale=0.8]
\draw (0.9,0) node[v] (V) {};
\draw [->] (0,0) -- (1.8,0);
\draw[->,blue] (1.8,1) .. controls +(-0.6,-0.5) .. (V);
\draw[blue] (V) .. controls +(-0.3,0.5) .. (0,1);
\end{tikzpicture}
~\right)=-w_\fso\left(~
\begin{tikzpicture}[scale=0.8]
\draw (0.9,0) node[v] (V) {};
\draw [->] (0,0) -- (1.8,0);
\draw[blue] (1.8,1) .. controls +(-0.6,-0.5) .. (V);
\draw[->,blue] (0,1) .. controls (0.6,0.5) .. (V);
\end{tikzpicture}
~\right).
$$
By applying this transformation several times, every extended permutation graph can be reduced to a usual permutation graph (characterized by the additional property that every edge has one head and one tail). This permutation graph is not unique
(two such graphs differ by orientation of some cycles), but the symmetry of $w_\fso$ formulated above implies that this ambiguity does not affect the extension of $w_\fso$.

\begin{definition}\label{def:so}
The universal polynomial invariant $w_\fso$ is the function on the set of permutations of any number of elements (or equivalently, on the set of permutation graphs) taking values in the ring of polynomials in the generators $S_0,S_2,S_4,\dots$ and defined by the following set of relations (axioms):
\begin{itemize}
\item $w_{\fso}$ is multiplicative with respect to the concatenation of the permutation graphs, $w_\fso(\alpha_1\#\alpha_2)=w_\fso(\alpha_1)w_\fso(\alpha_2)$. 
As a corollary, for the empty graph (with no vertices) the value of $w_{\fso}$ is equal to $1$;
\item A change of orientation of any cycle of length~$\ell$ in the graph results in multiplication of the value of the invariant~$w_\fso$ by~$(-1)^\ell$.
\item for a cyclic permutation of even number of elements {\rm(}with the cyclic order on the set of permuted elements  compatible with the permutation{\rm)} $1\mapsto2\mapsto\cdots\mapsto m\mapsto1$,
 the value of $w_{\fso}$ is the standard generator,
$$w_\fso\left(
  \begin{tikzpicture}
\draw (0,0) node[v] (V1) {}  node[above] {$\scriptsize 1$} (0.6,0) node[v] (V2) {}  node[above] {$\scriptsize 2$} (1.2,0) node[v] (V3) {} (2.4,0) node[v] (V4) {} (3,0) node[v] (V5) {}  node[above] {$\scriptsize m$}
(1.8,0) node {$\cdots$};
\draw[->] (-0.4,0) -- (3.6,0);
 \draw[->,blue] (V1) .. controls +(.3,.2) and +(-.3,.2) ..  (V2);
 \draw[->,blue] (V2) .. controls +(.3,.2) and +(-.3,.2) .. (V3);
 \draw[->,blue] (V4) .. controls +(.3,.2) and +(-.3,.2) ..  (V5);
 \draw[->,blue] (V5) .. controls +(-.5,-.5) and +(.5,-.5) .. (V1);
 \end{tikzpicture}
 \right)=S_m,\qquad m\text{ is even}.
 $$
\item {\rm(}\textbf{Recurrence Rule}{\rm)} For the graph of an arbitrary permutation $\alpha$,
and for any pair of its vertices labelled by consecutive integers $r,r+1$, we have for the values of the $w_\fso$ weight system
\begin{multline}\label{eq:rec-so1}
w_\fso\left(~
\begin{tikzpicture}[scale=0.8]
\draw (0.6,0) node[v] (V1) {} (1.2,0) node[v] (V2) {};
\draw  (0.5,-.25) node {\scriptsize$r$}  
 (1.3,-.25) node {\scriptsize$r{+}1$};
\draw [->] (0,0) -- (1.8,0);
\draw[->,blue] (0,0.7) .. controls (0.6,0.5) .. (V2);
\draw[-,blue] (V2) .. controls (0.6,0.9) .. (0,1.4);
\draw[->,blue] (1.8,1.4) .. controls (1.2,0.9) .. (V1);
\draw[-,blue] (V1) .. controls (1.2,0.5) .. (1.8,0.7);
\end{tikzpicture}
 ~\right)=w_\fso\left(~
\begin{tikzpicture}[scale=0.8]
\draw  (0.7,0) node[v] (V1) {} (1.1,0) node[v] (V2) {};
\draw [->] (0,0) -- (1.8,0);
\draw[->,blue] (0,0.7) .. controls (0.4,0.4) .. (V1);
\draw[-,blue] (V1) .. controls (0.4,0.9) .. (0,1.4);
\draw[->,blue] (1.8,1.4) .. controls (1.4,0.9) .. (V2);
\draw[-,blue] (V2) .. controls (1.4,0.4) .. (1.8,0.7);
\end{tikzpicture}
 ~\right)+w_\fso\left(~
\begin{tikzpicture}[scale=0.8]
\draw (0.9,0) node[v] (V) {};
\draw [->] (0,0) -- (1.8,0);
\draw[-,blue] (0,0.7) .. controls +(0.5,-0.5) and +(-0.5,-0.5) .. (1.8,0.7);
\draw[->,blue] (1.8,1.4) .. controls +(-0.6,-0.7) .. (V);
\draw[-,blue] (V) .. controls +(-0.3,0.7) .. (0,1.4);
\end{tikzpicture}
 ~\right)-w_\fso\left(~
\begin{tikzpicture}[scale=0.8]
\draw (0.9,0) node[v] (V) {};
\draw [->] (0,0) -- (1.8,0);
\draw[->,blue] (0,0.7) .. controls (0.4,0.5) .. (V);
\draw[-,blue] (V) .. controls (1.4,0.5) .. (1.8,0.7);
\draw[-,blue] (1.8,1.4) .. controls (0.9,0.2) .. (0,1.4);
\end{tikzpicture}
 ~\right)
 \\
+w_\fso\left(~
\begin{tikzpicture}[scale=0.8]
\draw (0.9,0) node[v] (V) {};
\draw [->] (0,0) -- (1.8,0);
\draw[-,blue] (0,0.7) .. controls (0.6,0.5) and +(-0.5,-0.7) .. (1.8,1.4);
\draw[<-,blue] (V) .. controls (0.6,0.9) .. (0,1.4);
\draw[-,blue] (V) .. controls (1.2,0.5) .. (1.8,0.7);
\end{tikzpicture}
 ~\right)-w_\fso\left(~
\begin{tikzpicture}[scale=0.8]
\draw (0.9,0) node[v] (V) {};
\draw [-] (0,0) -- (1.8,0);
\draw[-,blue] (1.8,0.7) .. controls +(-0.6,-0.4) and (0.6,0.9) .. (0,1.4);
\draw[->,blue] (1.8,1.4) .. controls (1.2,0.7) .. (V);
\draw[-,blue] (V) .. controls (0.6,0.5) .. (0,0.7);
\end{tikzpicture}
 ~\right)\end{multline}
\end{itemize}
The first three graphs on the right are ordinary permutation graphs, which
are the same as in the defining relation for the $w_\fgl$ weight system. The last two graphs are extended permutation graphs rather than just permutation graphs. Their expression in terms of permutation graphs is determined by the global structure of the original graph and depends on whether the vertices~$r$ and~$r+1$ belong to the same cycle or to two different ones. 
\end{definition}

It follows immediately from the definition that the universal $\fso$
weight system remains unchanged under the cyclic shift of a
permutation.

Below, we will write Eq.~(\ref{eq:rec-so1}) in the form
$$
w_\fso(\alpha)=w_\fso(\alpha')+w_\fso(\beta_1)-w_\fso(\beta_2)+w_\fso(\gamma_1)-w_\fso(\gamma_2).
$$
Here, permutations $\alpha$ and~$\alpha'$ are permutations of~$m$ elements, 
$\beta_1,\beta_2$ are permutations of~$m-1$ elements, and $ \gamma_1,\gamma_2$
are the extended permutation graphs on~$m-1$  elements.

\subsection{Lie algebra $\fso(N)$ with standard representation}

A given representation of a Lie algebra determines a weight system
with values in complex numbers. This weight system is obtained by
applying the representation composed of the $\text{Tr}$ operator.

A pair of permutations $\sigma,\alpha\in\S_m$, $\sigma$ being the standard cycle
$\sigma=(1,2,\dots,m)$, can be treated as an orientable hypermap
with a single (hyper)vertex, see details in~\cite{LZ03} or~\cite{KKL24}.
The hyperedges of the hypermap are the disjoint cycles of~$\alpha$.
This interpretation makes it natural to define the \emph{number of faces} $f(\alpha)$
of a permutation~$\alpha$ as the number of disjoint cycles in the product
$\alpha\sigma\in\S_m$. This number is equal to the number of connected components
of the boundary of the two-dimensional surface associated with the hypermap.

It is well known (see~\cite{BN95}) that in the standard representation
of the Lie algebra~$\fgl(N)$, the weight system~$w_{\fgl(N)}$ takes
a chord diagram~$D$ to the monomial~$N^{f(D)-1}$, $w^{{\rm St}}_{\fgl(N)}(D)=N^{f(D)-1}$.
This assertion immediately extends to the universal
$\fgl$ weight system and
arbitrary permutations (see~\cite{KL23}): for the substitution $C_i=N^{i-1}$, $i=1,2,\dots$,
the weight system~$w^{{\rm St}}_\fgl$ takes an arbitrary permutation~$\alpha$ to $N^{f(\alpha)-1}$,
$f(\alpha)$ being the number of faces of~$\alpha$.

For the $\fso(N)$ weight system in the standard representation, the assertion
acquires the following form. Recall first the result of D.~Bar-Natan~\cite{BN95}.
A \emph{state}~$s$ of a chord diagram~$D$ is a mapping $s:V(D)\to\{1,-1\}$
of the set of chords of~$D$ into a two-element set. Each state determines
a two-dimensional surface with boundary; the surface in question
is obtained by attaching
a disc to the circle of~$D$ and replacing each chord with an ordinary ribbon
attached to this disc,
if the state of the chord is~$1$, and by a half-twisted ribbon, if the state is~$-1$.
Note that if at least one chord is replaced by a half-twisted ribbon,
then the resulting surface will be nonorientable.
Denote by $f(D_s)$ the number of connected components of the boundary of the resulting surface,
and by~${\rm sign}(s)$ the \emph{sign} of~$s$, which is~$1$ if the number of chords in 
the state~$-1$ is even and $-1$ if this number is odd.

\begin{theorem}\cite{BN95}
For a chord diagram~$D$ of order~$n$,
$$w^{St}_{\fso(N)}(D) = \sum_{s:\{1,2,\dots,n\}\to\{1,-1\}} {\rm sign}(s)
N^{f(D_s)-1}\ ,
$$
where the sum is taken over all $2^n$ states for $D$.
\end{theorem}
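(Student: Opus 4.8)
The plan is to compute $w^{St}_{\fso(N)}(D)$ directly from the Kontsevich--Bar-Natan construction~\cite{BN95,K93}, using the description of the quadratic Casimir of $\fso(N)$ in the defining representation $V=\mathbb{C}^N$ as an element of the Brauer algebra on two strands.

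Recall that, by construction, $w^{St}_{\fso(N)}(D)$ is obtained by assigning to each chord of $D$ the split Casimir $2$-tensor $t=\sum_a X_a\otimes X_a\in\fso(N)\otimes\fso(N)$ dual to the invariant bilinear form used to normalize the generators $X_{ij}=E_{ij}-E_{\bar j\bar i}$, viewing $t$ as an operator on $V\otimes V$, composing all these operators along the Wilson circle of $D$, taking the trace in $V$, and dividing by one factor of $N$ (so that the chordless diagram goes to $1$). The first step is the linear-algebra identity
$$
t\;=\;P\;-\;Q,
$$
where $P\colon u\otimes v\mapsto v\otimes u$ is the transposition operator and $Q\colon u\otimes v\mapsto\langle u,v\rangle\sum_i e_i\otimes e_{\bar i}$ is the cap--cup operator built from the symmetric form $\langle e_i,e_j\rangle=\delta_{i\bar j}$ preserved by $\fso(N)$. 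I would prove this by expanding $\sum_{i,j}(E_{ij}-E_{\bar j\bar i})\otimes(E_{ji}-E_{\bar i\bar j})$ term by term; the four resulting sums collapse, after the substitution $k\mapsto\bar l,\ l\mapsto\bar k$ and use of the relation $X_{ij}=-X_{\bar j\bar i}$, to $2(P-Q)$, the factor $2$ reflecting precisely that $(i,j)$ and $(\bar j,\bar i)$ index the same basis element and being absorbed into the normalization.

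Next I would expand the trace. Substituting $t=P-Q$ on each of the $n$ chords and expanding multilinearly yields exactly $2^n$ terms, one for every state $s\colon\{1,\dots,n\}\to\{1,-1\}$: chord $k$ contributes the operator $P$ if $s(k)=1$ and the operator $-Q$ if $s(k)=-1$. Pulling the minus signs out produces the scalar $\prod_k s(k)=\mathrm{sign}(s)$ in front of the $s$-term. (Although the chords of $D$ may interleave, each operator acts only on the pair of circle strands meeting its chord, so the value of the full contraction does not depend on the order in which the operators are composed; this is what makes the state sum well defined.) It then remains to evaluate a single $s$-term: attaching to the disc bounded by the Wilson circle an untwisted band for every chord with $s(k)=1$ and a half-twisted band for every chord with $s(k)=-1$ produces precisely the surface $D_s$ of the theorem, and the index contraction of the corresponding operator product runs along the boundary of $D_s$, each closed index loop contributing a free summation over $\{1,\dots,N\}$, hence a factor $N$. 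Thus the $s$-term equals $N^{f(D_s)}$ before normalization and $N^{f(D_s)-1}$ after dividing by $N$; summing over all $s$ gives the formula. As a sanity check, on the single-chord diagram the two states give $N^{2-1}-N^{1-1}=N-1$, the Casimir eigenvalue of $\fso(N)$ on $V$ in this normalization.

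The main obstacle is the bookkeeping rather than any single hard idea: in the identity $t=P-Q$, fixing the normalization of the invariant form (equivalently, the factor $2$ above) so that no spurious scalar survives, and correctly handling the degenerate index values $i=\bar j$; and in the last step, the combinatorial verification that the number of closed index loops in the circular trace of the $s$-term equals the number of boundary components of $D_s$, together with the check that $P$ corresponds to the untwisted band (state $+1$) and $-Q$ to the half-twisted band (state $-1$), so that the sign carried by $Q$ is exactly $\mathrm{sign}(s)$ and using at least one copy of $Q$ makes the surface nonorientable, as asserted. An alternative, less transparent route would be to verify that the claimed right-hand side satisfies the defining recurrence for $w_\fso$ restricted to chord diagrams together with its value on the single chord, but the direct trace computation above is the cleaner approach.
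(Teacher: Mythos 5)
Your sketch is correct, and it is worth saying up front that the paper does not actually prove this statement: it is quoted from \cite{BN95}, and what the paper proves instead is the generalization to arbitrary permutations stated immediately afterwards. So the right comparison is with that proof. You work chord by chord: you decompose the operator assigned to an entire chord as $P-Q$ in the Brauer algebra on two strands and expand multilinearly, obtaining exactly one term per chord state, $2^n$ in all, with $P$ giving the untwisted band and $-Q$ the half-twisted one (your assignment of which is which, and of the sign, checks out on the one-chord diagram). The paper's proof of the permutation version instead works leg by leg: each generator $X_{ij}=E_{ij}-E_{\bar j\bar i}$ is split into its two matrix units, producing two states per permuted element ($2^{2n}$ states for a chord diagram of order $n$), and the monomials with nonzero trace are matched bijectively with states whose boundary components carry indices in $\{1,\dots,N\}$. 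That leg-by-leg bookkeeping is forced on the paper because for cycles of length greater than two there is no single ``chord operator'' to decompose; restricted to chords it produces each of your $2^n$ surfaces twice, which is precisely the origin of the factor $2^{|D|}$ in Theorem~\ref{th-cscd}. This is also why your attention to normalization is essential rather than pedantic: if one evaluates the paper's defining formula~(\ref{eq:wsonsigma}) literally on a chord diagram, each chord contributes $2(P-Q)$, not $P-Q$, so the identity as stated holds only in Bar-Natan's normalization (invariant form scaled so that the split Casimir is exactly $P-Q$), equivalently after dividing the paper's $w_{\fso}$ by $2^n$. With that caveat made explicit, as you do, the only substantive steps left to write out are the expansion verifying $t=P-Q$ over the antidiagonal form $\langle e_i,e_j\rangle=\delta_{i\bar j}$ and the standard identification of closed index loops with boundary components of $D_s$; both are routine, and your direct trace computation is indeed cleaner than verifying the recurrence of Definition~\ref{def:so} on the right-hand side.
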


This theorem means that the result is valid for arc diagrams, which are 
involutions without fixed points.
To extend this result to arbitrary permutations, define a \emph{state} of 
a permutation $\alpha\in \S_m$
as a mapping $s:\{1,2,\dots,m\}\to \{1,-1\}$. Note that according to this definition,
a chord diagram of order~$n$ has $2^{2n}$ states as opposed to the $2^n$
states in the original Bar-Natan definition.

Similarly to the chord diagram case, 
each state $s$ determines a hypermap. The boundary of the corresponding two-dimensional
surface with boundary is determined in terms of the graph of~$\alpha$
according to the following resolutions of each of the permuted elements:

\[
\begin{tikzpicture}[scale=0.8]
\draw (0.9,0) node[v] (V) {};
\draw (0.9,0) node[above] {$l$};
\draw [->] (0,0) -- (1.8,0);
\draw[<-,blue] (1.8,1) .. controls +(-0.6,-0.5) .. (V);
\draw[->,blue] (0,1) .. controls (0.6,0.5) .. (V);
\end{tikzpicture}
\begin{tikzpicture}[scale=0.8]
\draw [|->] (0,0) -- (.8,0);
\end{tikzpicture}
\begin{tikzpicture}[scale=0.8]
\draw [-] (.0,0) -- (.8,0);
\draw [->] (1.2,0) -- (2,0);
\draw[<-,blue] (2,1) .. controls +(-0.6,-0.5) .. (1.2,0);
\draw[<-,blue] (.8,0) .. controls +(-0.3,0.5) .. (0,1);
\end{tikzpicture}\ ,\mbox{\ if\ }s(l)=1;\qquad
\begin{tikzpicture}[scale=0.8]
\draw (0.9,0) node[v] (V) {};
\draw (0.9,0) node[above] {$l$};
\draw [->] (0,0) -- (1.8,0);
\draw[<-,blue] (1.8,1) .. controls +(-0.6,-0.5) .. (V);
\draw[->,blue] (0,1) .. controls (0.6,0.5) .. (V);
\end{tikzpicture}
\begin{tikzpicture}[scale=0.8]
\draw [->] (0,0) -- (.8,0);
\end{tikzpicture}
\begin{tikzpicture}[scale=0.8]
\draw [-] (0,0) -- (.8,0);
\draw [->] (1.2,0) -- (2,0);
\draw[<-,blue] (1.8,1) .. controls +(-0.6,-0.5) .. (.8,0);
\draw[<-,blue] (1.2,0) .. controls +(-0.3,0.5) .. (0,1);
\end{tikzpicture}\ ,\mbox{\ if\ }s(l)=-1.\qquad
\]
For a point in the state~$1$ (respectively, $-1$), the end of the entering arrow 
of the permutation graph 
shifts to the left (respectively, to the right), and the beginning 
of the leaving arrow shifts to the right (respectively, to the left)
along the horizontal line.

In other words, to a disjoint cycle of length~$\ell$ an $\ell$-hyperedge
is associated. The latter is a $2\ell$-gon, each second edge of which is attached
to the disc, either in a non-twisted, or in a half-twisted fashion, depending
on the state of the corresponding permuted element.
Denote by $f(\alpha_s)$ the number of connected components of the obtained curve
(that is, the number of connected components of the boundary of the resulting surface).
The \emph{sign} of~$s$, ${\rm sign}(s)$, is~$1$ if the number of legs in 
the state~$-1$ is even and $-1$ if this number is odd.

\begin{theorem}
For a permutation~$\alpha\in\S_m$, we have
$$w^{St}_{\fso(N)}(\alpha) = \sum_{s:\{1,2,\dots,m\}\to\{1,-1\}}{\rm sign}(s)
N^{f(\alpha_s)-1}\ ,
$$
where the sum is taken over all the $2^m$ states for $\alpha$.
\end{theorem}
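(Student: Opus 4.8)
The plan is to reduce the statement to a direct matrix computation, in the same spirit as Bar--Natan's computation underlying the preceding theorem, the only new feature being the combinatorial bookkeeping for a general permutation. First I would substitute the standard $N$-dimensional representation into the right-hand side of~\eqref{eq:wsonsigma} and take the normalized trace $\tfrac1N\Tr$ (equivalently, the scalar by which the resulting central element of $U\fso(N)$ acts); since $X_{ij}=E_{ij}-E_{\bar j\bar i}$, this gives
\[
w^{\rm St}_{\fso(N)}(\alpha)=\frac1N\sum_{i_1,\dots,i_m=1}^{N}\Tr\Bigl(\prod_{k=1}^{m}\bigl(E_{i_ki_{\alpha(k)}}-E_{\overline{i_{\alpha(k)}}\,\overline{i_k}}\bigr)\Bigr).
\]
Expanding each of the $m$ factors as a difference of two matrix units turns this into a sum over the $2^m$ maps $s\colon\{1,\dots,m\}\to\{1,-1\}$: in the $s$-summand one keeps $E_{i_ki_{\alpha(k)}}$ at the legs $k$ with $s(k)=1$ and $-E_{\overline{i_{\alpha(k)}}\,\overline{i_k}}$ at the legs with $s(k)=-1$. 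The product of the signs so produced is precisely $\mathrm{sign}(s)$, so everything reduces to showing that each $s$-summand equals $\mathrm{sign}(s)\,N^{f(\alpha_s)-1}$.

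Next, for a fixed $s$ the product $M_1^{(s)}\cdots M_m^{(s)}$ of the selected matrix units is again a matrix unit, and its trace is $0$ or $1$: it is $1$ exactly when the column index of each factor equals the row index of the next one, cyclically. Because each summation variable $i_l$ appears in exactly two of the $m$ factors, these matching conditions glue the $2m$ index slots into a disjoint union of cycles, and an assignment $i_1,\dots,i_m\in\{1,\dots,N\}$ satisfies all of them iff the value carried around each cycle is chosen consistently; hence the $s$-summand is $\mathrm{sign}(s)$ times $N$ raised to the number of such cycles. I would then identify that number with $f(\alpha_s)$ by a comparison showing that the gluing rule for the index slots reproduces exactly the resolution rule stated just before the theorem: selecting the $E_{i_ki_{\alpha(k)}}$-term rather than the $-E_{\overline{i_{\alpha(k)}}\,\overline{i_k}}$-term at leg~$k$ is the same as pushing the incoming arrow-end to the left and the outgoing one to the right rather than conversely. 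Thus the index cycles are in canonical bijection with the boundary components of the surface $\alpha_s$, of which there are $f(\alpha_s)$; the factor $\tfrac1N$ then converts $N^{f(\alpha_s)}$ into $N^{f(\alpha_s)-1}$.

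The step I expect to be the actual obstacle is the verification that every index cycle carries a full set of $N$ values, i.e.\ that the conjugation $i\mapsto\bar i=N+1-i$, which creeps in because~\eqref{eq:wsonsigma} is written in the anti-diagonal presentation of $\fso(N)$, composes to the identity (not to $i\mapsto\bar i$) around each cycle; otherwise a cycle would contribute $0$ or $1$ instead of $N$ and the formula would fail depending on the parity of $N$. The cleanest way around this is to use the fact that $w^{\rm St}_{\fso(N)}(\alpha)$ does not depend on the chosen invariant form and to run the whole computation in the orthonormal presentation, where the generators are $E_{ij}-E_{ji}$: then no conjugations $i\mapsto\bar i$ occur at all, the count $N^{f(\alpha_s)}$ is immediate, and the resolution picture is unchanged. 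Alternatively one checks the claim directly, showing that each boundary component meets an even number of the half-twisted sides, so that the bars always cancel in pairs. Once this bookkeeping is in place the theorem follows, $w^{\rm St}_{\fso(N)}(\alpha)=\frac1N\sum_s\mathrm{sign}(s)\,N^{f(\alpha_s)}=\sum_s\mathrm{sign}(s)\,N^{f(\alpha_s)-1}$; the argument is a routine extension of Bar--Natan's, the genuinely new content being only the matching between index cycles and boundary components of $\alpha_s$ for an arbitrary $\alpha$ in place of a fixed-point-free involution.
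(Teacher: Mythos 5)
Your proposal is correct and follows essentially the same route as the paper's proof: expand each factor $X_{ij}$ into its two matrix-unit terms, identify the $2^m$ choices with states, and count the index assignments giving nonzero trace as $N^{f(\alpha_s)}$ by matching index cycles with boundary components of the resolved hypermap. The subtlety you flag about the anti-diagonal presentation is real but is resolved exactly as you suggest: the paper silently carries out the computation with the orthonormal generators $X_{ij}=E_{ij}-E_{ji}$, which is legitimate since the trace of the image of the central element does not depend on the choice of invariant form.
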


\begin{proof} The weight system corresponding to
the standard representation of the Lie algebra $\fso(N)$
is the composition of the mapping $\frac1{N}{\rm Tr}$ with 
the standard representation of $U\fso(N)$.
The generator $X_{ij}$, $1\le i,j\le N$ is represented by the difference of
the matrix units $X_{ij}=E_{ij}-E_{ji}$. A product of several matrix units $E_{ij}$
is either a zero matrix, or a matrix unit, and its trace is nonzero
iff it is a diagonal matrix unit. In the latter case, the trace is~$1$.

For a given state of a permutation of~$m$ elements, 
associate an index between~$1$ and~$N$ to each 
connected component of the boundary of the corresponding hypermap,
obtained as described above.
To such an assignment, a unique product of~$m$ matrix units $E_{ij}$ is associated:
the indices $i$ and $j$ at each leg are determined by the two boundary
components passing through this leg (which may well coincide).
Here, from the difference $X_{ij}=E_{ij}-E_{ji}$ we take $E_{ij}$ if the
corresponding leg is in state~$1$, and $-E_{ji}$ otherwise.
In this way we establish a one-to-one correspondence between states with
numbered boundary components and those monomials in the matrix units 
in the product~(\ref{eq:wsonsigma}) which yield a nonzero contribution
to the trace of the resulting sum of the matrices in the standard representation.

Now, since the index of each boundary component varies from~$1$ to~$N$,
the contribution to the trace of a given state~$s$ of a permutation~$\alpha$
is~$N^{f(\alpha_s)}$. Dividing by~$N$ and taking into account that the sign
of the contribution is plus or minus depending on the parity
of the number of negative states
of the legs, we obtain the desired.
\end{proof}

In particular, for standard cycles we immediately obtain by induction over
the number of permuted elements

\begin{corollary}  The standard representation of~$\fso$ on the standard cycles
acquires the form
$$w_{\fso(N)}^{St}((1,2,\dots,m))=
w_{\fso(N)}^{St}\left(
  \begin{tikzpicture}
\draw (0,0) node[v] (V1) {}  node[above] {$\scriptsize 1$} (0.6,0) node[v] (V2) {}  node[above] {$\scriptsize 2$} (1.2,0) node[v] (V3) {} (2.4,0) node[v] (V4) {} (3,0) node[v] (V5) {}  node[above] {$\scriptsize m$}
(1.8,0) node {$\cdots$};
\draw[->] (-0.4,0) -- (3.6,0);
 \draw[->,blue] (V1) .. controls +(.3,.2) and +(-.3,.2) ..  (V2);
 \draw[->,blue] (V2) .. controls +(.3,.2) and +(-.3,.2) .. (V3);
 \draw[->,blue] (V4) .. controls +(.3,.2) and +(-.3,.2) ..  (V5);
 \draw[->,blue] (V5) .. controls +(-.5,-.5) and +(.5,-.5) .. (V1);
 \end{tikzpicture}
 \right)=\begin{cases}
 \frac{(N-1)^m+1-N}{N}, &\qquad m\text{ is odd};\\
 \frac{(N-1)^m-1+N^2}{N}, &\qquad m\text{ is even}.
 \end{cases}
 $$
 In other words, the value of the~$\fso(N)$ weight system in the standard representation
 on a given permutation~$\alpha$ can be obtained by substituting these
 values of Casimirs to $\fso(N)$.
\end{corollary}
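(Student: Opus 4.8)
The plan is to compute $w^{St}_{\fso(N)}$ on a standard cycle by converting the state sum of the preceding theorem into the trace of a power of a single small operator, diagonalizing it, and splitting into the two parity cases; the statement for arbitrary permutations will then follow formally. By~(\ref{eq:wsonsigma}) and the description of the standard representation used in that proof (in which $X_{ij}$ acts as $E_{ij}-E_{ji}$),
\[
w^{St}_{\fso(N)}\bigl((1,2,\dots,m)\bigr)=\tfrac1N\Tr_{\mathbb C^N}\Bigl(\textstyle\sum_{i_1,\dots,i_m}X_{i_1i_2}X_{i_2i_3}\cdots X_{i_mi_1}\Bigr).
\]
Introducing an auxiliary copy $W\cong\mathbb C^N$ to carry the summation indices and setting $T=\sum_{a,b}(E_{ab}-E_{ba})\otimes E_{ab}\in\operatorname{End}(\mathbb C^N\otimes W)$, a direct computation identifies the inner sum with $\Tr_W(T^m)$, whence $w^{St}_{\fso(N)}((1,2,\dots,m))=\tfrac1N\Tr_{\mathbb C^N\otimes W}(T^m)$. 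Unwinding the definition, $T=R-P$, where $P=\sum_{a,b}E_{ab}\otimes E_{ba}$ is the transposition of the two tensor factors of $\mathbb C^N\otimes\mathbb C^N$ and $R=\sum_{a,b}E_{ab}\otimes E_{ab}=|\Omega\rangle\langle\Omega|$ with $\Omega=\sum_a e_a\otimes e_a$. (This $T$ is precisely what results from regrouping the state sum for a single cycle: $\Tr(T^m)=\sum_s\operatorname{sign}(s)\,N^{f(\alpha_s)}$.)

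I would then diagonalize $T$. Since $P\Omega=\Omega$, one has $PR=RP=R$, together with $P^2=\operatorname{id}$ and $R^2=NR$ (as $\langle\Omega,\Omega\rangle=N$); thus $P$ and $R$ commute and $\mathbb C^N\otimes\mathbb C^N$ decomposes into the joint eigenspaces $\mathbb C\Omega$, the orthocomplement of $\mathbb C\Omega$ in $\mathrm{Sym}^2\mathbb C^N$, and $\Lambda^2\mathbb C^N$, of dimensions $1$, $\binom{N+1}2-1$, $\binom N2$, on which $T=R-P$ acts by the scalars $N-1$, $-1$, $1$. Hence $\Tr(T^m)=(N-1)^m+(-1)^m\bigl(\binom{N+1}2-1\bigr)+\binom N2$, and dividing by $N$ and using $\binom N2+\binom{N+1}2=N^2$ for $m$ even, $\binom{N+1}2-\binom N2=N$ for $m$ odd, produces exactly the two closed forms in the statement. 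The cases $m=1,2$, where the answer is $0$ and $2N-2$, check the normalization.

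For the last assertion I would note that $w^{St}_{\fso(N)}$, viewed as a function on all permutations, satisfies the full list of axioms of Definition~\ref{def:so} once each generator is specialized by $S_{2k}\mapsto s_{2k}(N):=\tfrac{(N-1)^{2k}-1+N^2}{N}$ (so in particular $S_0\mapsto N$): multiplicativity and the orientation-reversal sign are immediate from the trace description, the value on the even standard cycle $(1,\dots,2k)$ equals $s_{2k}(N)$ by the case just proved, and the Recurrence Rule~(\ref{eq:rec-so1}) holds because it is an identity already valid for the $U\fso(N)$-valued invariant $w_{\fso(N)}$ — this is the relation that motivated~(\ref{eq:rec-so1}); see~\cite{KY23}. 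As those axioms determine $w_\fso$ uniquely, $w^{St}_{\fso(N)}(\alpha)=w_\fso(\alpha)\big|_{S_{2k}=s_{2k}(N)}$ for every permutation $\alpha$, and for odd $m$ the odd-cycle formula of the statement reappears as the specialization of the polynomial $w_\fso((1,\dots,m))$, which serves as a consistency check. The only genuine computation here is the spectral decomposition of $R-P$, which is elementary; the point requiring care is invoking that~(\ref{eq:rec-so1}) is literally valid for $w_{\fso(N)}$ (equivalently, that $w_\fso$ specializes to $w_{\fso(N)}$ under $S_k\mapsto$ Casimir), which we take from the construction of $w_\fso$ rather than reprove. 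If one prefers to avoid diagonalization, the relations $P^2=\operatorname{id}$, $R^2=NR$, $PR=RP=R$ show via $T^m=T^{m-1}T$ that $a_m:=w^{St}_{\fso(N)}((1,\dots,m))$ satisfies a linear recurrence with characteristic roots $N-1,1,-1$, so $a_m=c_1(N-1)^m+c_2+c_3(-1)^m$ with $c_1,c_2,c_3$ fixed by $a_1=0$, $a_2=2N-2$, $a_3=(N-1)(N-2)$ — giving the same two formulas.
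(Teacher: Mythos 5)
Your computation is correct, and it takes a genuinely different route from the paper. The paper derives the corollary from the state-sum theorem by an (unwritten) induction on the number of permuted elements; you instead package the sum in~(\ref{eq:wsonsigma}) for the standard cycle as $\tfrac1N\Tr(T^m)$ for the single transfer operator $T=R-P$ on $\mathbb{C}^N\otimes\mathbb{C}^N$ and diagonalize it. The identities $P^2=\mathrm{id}$, $R^2=NR$, $PR=RP=R$ and the resulting eigenvalues $N-1,\,-1,\,1$ with multiplicities $1$, $\binom{N+1}{2}-1$, $\binom{N}{2}$ are all right, and the two closed forms drop out of $\binom{N+1}{2}+\binom{N}{2}=N^2$ and $\binom{N+1}{2}-\binom{N}{2}=N$; the small checks $a_1=0$, $a_2=2N-2$, $a_3=(N-1)(N-2)$ confirm the normalization $\tfrac1N$. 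What your approach buys is a closed-form, non-inductive derivation together with a clean explanation of \emph{why} the answer is a combination of $(N-1)^m$, $1$, and $(-1)^m$ (they are the eigenvalues of $T$), and your remark that $\Tr(T^m)=\sum_s\mathrm{sign}(s)N^{f(\alpha_s)}$ correctly identifies $T$ as the transfer matrix of the Bar-Natan state model, so the two proofs are reconciled; the paper's induction is essentially your closing alternative via the linear recurrence with characteristic roots $N-1,1,-1$. Two harmless points to flag: you use the presentation $X_{ij}=E_{ij}-E_{ji}$ rather than the anti-diagonal one from the definition of $\fso(N)$, but these are conjugate and the paper's own proof of the preceding theorem does the same; and for the final assertion about general permutations you, like the paper, import from the construction of $w_\fso$ (cf.~\cite{KY23}) that the defining relations hold for $w_{\fso(N)}$ after specializing the generators, which is legitimate here since the corollary only claims the identification of the specialized Casimir values.
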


Below, for~$k$ even, we will use notation
 \begin{equation}\label{e-P}
       P_k(N)=\frac{(N-1)^k-1+N^2}{N}
  \end{equation} 
for polynomials which are the values of the~$\fso(N)$ weight system on standard
cycles of even length.

\section{Recurrence rule for pre-chromatic substitution}

%\subsection{Monotone permutations}

\subsection{Hopf algebras of permutations}

In what follows, we will distinguish certain classes of permutations.

\begin{definition}\rm
    An \emph{ascent} of a permutation~$\alpha\in\S_m$
    is a number $i$, $0<i<m$ such that $\alpha(i)>i$. We denote
    the number of ascents in~$\alpha$ by $a(\alpha)$.

A cyclic permutation~$\alpha\in\S_m$ is \emph{positive} (respectively,
\emph{negative}) if $a(\alpha)=m-1$ (respectively, if $a(\alpha)=1$).
In other words, the permutation $(1,2,\dots,m)$ is the only positive cyclic permutation
of~$m$ elements, while $(1,m,m-1,\dots,2)$ is the only negative cyclic permutation
of~$m$ elements.
Note that the cyclic permutation $(12)\in\S_2$ is both positive and negative.
By convention, we also consider the cyclic permutation $(1)\in\S_1$ 
as being both positive and negative. 
    
    A  permutation~$\alpha\in\S_m$ 
    is \emph{positive} (respecitvely, \emph{negative}) if  $a(\alpha)=m-c(\alpha)$
    (respectively, if $a(\alpha)=c(\alpha)$), where $c(\alpha)$ is the number of cycles in $\alpha$. This is equivalent to saying that each disjoint cycle in~$\alpha$
    is positive (respectively, negative).
The restriction
$\alpha|_U$ of a positive (respectively, negative) permutation~$\alpha\in\S_m$ to an arbitrary subset 
$U\subset\{1,2,\dots,m\}$ is a positive (respectively, negative) permutation.

    A permutation is \emph{monotone} if each of its disjoint
    cycles is either positive or negative. The restriction of a monotone permutation
    to any subset of permuted elements also is a monotone permutation.
\end{definition}

In~\cite{KKL24}, several Hopf algebras of cyclic equivalence classes of permutations
were introduced. The
cyclic equivalence of permutations of~$m$ elements is defined as follows:
\begin{itemize}
    \item any cyclic shift of a permutation is equivalent to it, 
    $\alpha\sim\sigma^{-1}\alpha\sigma$;
    \item the concatenation product of any two permutations is equivalent
    to the concatenation product of any pair of permutations 
    cyclically equivalent to the factors.
\end{itemize}

Cyclic equivalence classes of permutations span the rotational Hopf algebra~$\cA$
of permutations.
Denote the equivalence class of a permutation~$\alpha$
by $[\alpha]$. Let $$
\cA=\cA_0\oplus \cA_1\oplus \cA_2\oplus\dots
$$
be the infinite dimensional vector space, which is the direct sum of
the finite dimensional vector spaces $\cA_m$, $m=0,1,2,\dots$,
the vector space~$\cA_m$ being freely spanned by the equivalence
classes of permutations of~$m$ elements. 
The concatenation product of permutations makes $\cA$ into
an infinite dimensional commutative graded algebra.
The comultiplication $\mu:\cA\to\cA\otimes\cA$ is defined as follows:
for a generator $[\alpha]\in \cA_m$ take a permutation $\alpha\in\S_m$
representing this generator and set
$$
\mu([\alpha])=[\mu(\alpha)]=\sum_{I\sqcup J=V(\alpha)}[{\alpha|_{I}]\otimes[\alpha|_{J}}],
$$
where the summation is carried over all the ways to represent the set
$V(\alpha)$ of disjoint cycles of~$\alpha$ into a disjoint
union of two subsets $I,J$, and $\alpha|_{I}$
denotes the restriction of $\alpha$ to the subset 
of elements contained in the cycles from $I$.
Here, following~\cite{KoL23}, we define the \emph{restriction} $\alpha|_U\in\S_{|U|}$
of a permutation~$\alpha\in\S_m$
to a subset $U\subset\{1,2,\dots,m\}$ as a permutation of the elements
of~$U$ preserving their relative cyclic order inside each disjoint 
cycle in~$\alpha$, composed with the subsequent renumbering to $\{1,2,\dots,|U|\}$.

The rotational Hopf algebra~$\cA^+$
of positive permutations, and the rotational Hopf algebra~$\cB$
of chord diagrams, so that $\cB\subset\cA^+\subset\cA$, 
are Hopf subalgebras of~$\cA$,
see details in~\cite{KKL24}.
We add to this list the rotational Hopf algebra $\cA^-$ of negative permutations
and the rotational Hopf algebra $\cA^\pm$ of monotone permutations, so that
$\cA^-\subset\cA^\pm\subset\cA$ and $\cA^+\subset\cA^\pm$. Each of these
Hopf algebras is spanned by the cyclic equivalence classes of permutations
of corresponding types.

\subsection{Prechromatic substitution}

In~\cite{KKL24}, the prechromatic substitution for the Casimir elements~$C_1,C_2,\dots$
in the universal $\fgl$ weight system
acquires the form $C_k=p_kN^{k-1}$, $k=1,2,\dots$. This substitution can be considered as a
perturbation of the $\fgl(N)$ weight system in the standard representation,
the latter being given by $C_k=N^{k-1}$, $k=1,2,\dots$.

For the universal $\fso$ weight system, the corresponding substitution would require 
multiplication of the standard representation value for the Casimirs~$S_k$ by~$p_k$.

\begin{definition}[prechromatic substitution]
        For a permutation $\alpha\in \mathbb{S}_m$, $c(\alpha)$ being the number of cycles in 
        $\alpha$, we define the \emph{prechromatic substitution} by
    \[
    Y(\alpha)=N^{c(\alpha)-m}w_\fso(\alpha)|_{S_0=N,S_k=p_k\cdot P_k(N), k=2,4,6\dots},
    \]
with~$P_k(N)$ given by Eq.~(\ref{e-P}).
\end{definition} 

Obviously, $Y$ can be written as a finite sum
    \[
        Y(\alpha)=Y_0(\alpha)+Y_1(\alpha)N^{-1}+Y_2(\alpha)N^{-2}+\cdots
    \]
    where each $Y_k(\alpha)$ is a polynomial in $p_2,p_4,p_6\dots$.

In this paper, our main goal is to describe the properties of 
the leading term~$Y_0$  of the prechromatic substitution.
We are planning to analyze the next terms,
which we consider to be of importance, in future papers.

%\begin{definition}[simplified prechromatic substitution]
%        For a permutation $\alpha\in \mathbb{S}_m$, $c(\alpha)$ being the number of cycles in 
%        $\alpha$, we set 
 %   \[
 % \widehat  Y(\alpha)=N^{c(\alpha)-m}w_\fso(\alpha)|_{S_m=p_mN^{m-1}, m=2,4,6\dots.}
 %   \]
 %   then $\widehat Y$ can be written as a finite sum
 %   \[
 %       \widehat Y(\alpha)=\widehat Y_0(\alpha)+\widehat Y_1(\alpha)N^{-1}+\widehat Y_2(\alpha)N^{-2}+\cdots
 %   \]
 %   where each $\widehat Y_k(\alpha)$ is a polynomial in $p_2,p_4,p_6\dots$.
%\end{definition} 

It is easy to see that 

\begin{lemma} The constant terms in~$N$ (that is, coefficients of~$N^0$) for the 
prechromatic substitution are determined 
by the leading term $p_kN^{k-1}$, $k=2,4,6,\dots$, of the polynomials~$P_k$
given by~Eq.(\ref{e-P}), for each~$\alpha$.
\end{lemma}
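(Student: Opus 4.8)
The plan is to trace how $N$-powers enter the prechromatic substitution $Y(\alpha) = N^{c(\alpha)-m}\,w_\fso(\alpha)|_{S_0=N,\ S_k=p_k P_k(N)}$ and to isolate those monomials in the $S$-variables whose image under the substitution contributes to the $N^0$-coefficient $Y_0$. First I would recall that $w_\fso(\alpha)$, being multiplicative over concatenation and reducible via the Recurrence Rule, is a polynomial in $S_0, S_2, S_4,\dots$; writing a typical monomial as $S_0^{\,e_0} S_2^{\,e_2} S_4^{\,e_4}\cdots$ with total "weight" bounded appropriately, I would substitute $S_0 = N$ and $S_k = p_k P_k(N)$ and examine the $N$-degree. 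Since $P_k(N) = \frac{(N-1)^k - 1 + N^2}{N}$ has leading term $p_k N^{k-1}$ (top degree $k-1$, the "$N^2/N$" and lower-order pieces being subdominant for $k\ge 2$), the leading-in-$N$ behaviour of the substitution on each monomial is governed entirely by replacing $S_0 \mapsto N$ and $S_k \mapsto p_k N^{k-1}$. This is precisely the "standard representation" substitution twisted by the $p_k$'s, and it is exactly the substitution whose leading term is studied in the sequel.

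The key step is a bookkeeping lemma: for a fixed permutation $\alpha$, the overall $N$-degree of the image of a monomial $S_0^{e_0}\prod_k S_k^{e_k}$ under the full substitution differs from its degree under the simplified substitution $S_0\mapsto N$, $S_k\mapsto p_k N^{k-1}$ by a quantity that is strictly negative unless one uses only the top terms of each $P_k$. Concretely, replacing one factor $P_k(N)$ by its subleading part drops the $N$-degree by at least one, so any monomial whose value contributes to $N^0$ must have every $P_k$-factor contributing its top term $N^{k-1}$. Combined with the prefactor $N^{c(\alpha)-m}$, this shows that the coefficient of $N^0$ in $Y(\alpha)$ — that is, $Y_0(\alpha)$ — is unchanged if one replaces each $P_k(N)$ by $p_k N^{k-1}$ from the outset. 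I would phrase this cleanly by saying that $Y_0(\alpha)$ equals the $N^0$-coefficient of $N^{c(\alpha)-m}\,w_\fso(\alpha)|_{S_0=N,\ S_k=p_k N^{k-1}}$, which is manifestly determined only by the leading terms of the $P_k$.

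The main obstacle is that $w_\fso(\alpha)$ is not a single monomial but a signed sum of many, produced by the Recurrence Rule, and a priori there could be cancellation: a monomial using a subleading part of some $P_k$ might have top $N$-degree but its leading coefficient could cancel against another monomial, shifting which terms actually matter for $N^0$. To handle this I would argue at the level of the whole polynomial rather than term by term: write $w_\fso(\alpha) = Q(\alpha; S_0, S_2, S_4,\dots)$, a fixed polynomial independent of $N$; then $Y(\alpha) = N^{c(\alpha)-m} Q(\alpha; N, p_2 P_2(N), p_4 P_4(N),\dots)$. Expanding $Q$ in a finite basis of monomials in the $S$-variables, the substitution is a finite sum; grouping by $N$-power and observing that each $P_k(N) = p_k N^{k-1}(1 + O(N^{-1}))$ shows that the coefficient of $N^0$ in the sum is a polynomial functional of the leading coefficients $p_k N^{k-1}$ alone — the $O(N^{-1})$ corrections only feed strictly lower $N$-powers and hence contribute to $Y_1, Y_2,\dots$ but never to $Y_0$. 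No cancellation issue arises because we never need to know which individual monomials survive; we only need that the $O(N^{-1})$ remainder of each $P_k$ cannot raise an $N$-power, which is immediate from the explicit form of $P_k$ in Eq.~(\ref{e-P}). This completes the proof.
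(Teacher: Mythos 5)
The paper gives no proof of this lemma at all --- it is introduced with ``It is easy to see that'' --- so there is no argument of the authors' to compare yours against; your proof is the natural one and is essentially correct. The only point that needs to be made explicit is the degree bound you allude to with ``total weight bounded appropriately'': the observation that the $O(N^{-1})$ corrections to each $P_k$ can only lower the $N$-degree proves that they never reach $N^0$ only if one already knows that the leading-term substitution lands in non-positive powers of $N$ after multiplying by $N^{c(\alpha)-m}$, i.e.\ that every monomial $S_0^{e_0}\prod_k S_k^{e_k}$ occurring in $w_\fso(\alpha)$ satisfies $e_0+\sum_k e_k(k-1)\le m-c(\alpha)$. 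Otherwise a monomial of positive $N$-degree could push its subleading corrections into the $N^0$ slot, and your remark that ``no cancellation issue arises'' would not cover this. The bound is not free: it is equivalent to the paper's own (also unproved) assertion just above the lemma that $Y(\alpha)$ expands in non-positive powers of $N$, since within each fixed multidegree in $p_2,p_4,\dots$ the monomial with the largest $S_0$-exponent has a non-cancellable top coefficient; alternatively it can be checked by induction on the recurrence rule. Granting it, your grouping of the substitution by $N$-powers does show that $Y_0$ is determined by the leading terms alone. (A cosmetic remark: for $k=2$ the leading term of $P_2(N)=2N-2$ is $2N$ rather than $N^{k-1}$, a factor the lemma's wording glosses over and which is ultimately why the chromatic substitution later takes $p_2=2x$.)
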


Specializing the defining relations for the $\fso$ weight system to 
the prechromatic substitution we immediately obtain

\begin{theorem}[Recurrence rule for pre-chromatic substitution]
The recursion for $w_{\fso}$, when rewritten in terms of $Y$, acquires the following
form:
    \begin{itemize}
    \item the invariant $Y$ takes equal values on all permutations in one equivalence
class;
    \item  it is multiplicative with respect to concatenation product of permutations;
    \item the value of $Y$ on the standard $m$-cycle of even length
    $m=2n$, is equal to $p_{2n}P_{2n}(N)$;
    \item if $r$ and $r + 1$ belong to different cycles, then we have
\begin{multline}\label{eq:rec-X1}
    Y\left(~
\begin{tikzpicture}[scale=0.8]
\draw (0.6,0) node[v] (V1) {} (1.2,0) node[v] (V2) {};
\draw  (0.5,-.25) node {\scriptsize$r$}  
 (1.3,-.25) node {\scriptsize$r{+}1$};
\draw [->] (0,0) -- (1.8,0);
\draw[->,blue] (0,0.7) .. controls (0.6,0.5) .. (V2);
\draw[-,blue] (V2) .. controls (0.6,0.9) .. (0,1.4);
\draw[->,blue] (1.8,1.4) .. controls (1.2,0.9) .. (V1);
\draw[-,blue] (V1) .. controls (1.2,0.5) .. (1.8,0.7);
\end{tikzpicture}
 ~\right)=Y\left(~
\begin{tikzpicture}[scale=0.8]
\draw  (0.7,0) node[v] (V1) {} (1.1,0) node[v] (V2) {};
\draw [->] (0,0) -- (1.8,0);
\draw[->,blue] (0,0.7) .. controls (0.4,0.4) .. (V1);
\draw[-,blue] (V1) .. controls (0.4,0.9) .. (0,1.4);
\draw[->,blue] (1.8,1.4) .. controls (1.4,0.9) .. (V2);
\draw[-,blue] (V2) .. controls (1.4,0.4) .. (1.8,0.7);
\end{tikzpicture}
 ~\right)+Y\left(~
\begin{tikzpicture}[scale=0.8]
\draw (0.9,0) node[v] (V) {};
\draw [->] (0,0) -- (1.8,0);
\draw[-,blue] (0,0.7) .. controls +(0.5,-0.5) and +(-0.5,-0.5) .. (1.8,0.7);
\draw[->,blue] (1.8,1.4) .. controls +(-0.6,-0.7) .. (V);
\draw[-,blue] (V) .. controls +(-0.3,0.7) .. (0,1.4);
\end{tikzpicture}
 ~\right)-Y\left(~
\begin{tikzpicture}[scale=0.8]
\draw (0.9,0) node[v] (V) {};
\draw [->] (0,0) -- (1.8,0);
\draw[->,blue] (0,0.7) .. controls (0.4,0.5) .. (V);
\draw[-,blue] (V) .. controls (1.4,0.5) .. (1.8,0.7);
\draw[-,blue] (1.8,1.4) .. controls (0.9,0.2) .. (0,1.4);
\end{tikzpicture}
 ~\right)
 \\
+Y\left(~
\begin{tikzpicture}[scale=0.8]
\draw (0.9,0) node[v] (V) {};
\draw [->] (0,0) -- (1.8,0);
\draw[blue] (0,0.7) .. controls (0.6,0.5) and +(-0.5,-0.7) .. (1.8,1.4);
\draw[<-,blue] (V) .. controls (0.6,0.9) .. (0,1.4);
\draw[-,blue] (V) .. controls (1.2,0.5) .. (1.8,0.7);
\end{tikzpicture}
 ~\right)-Y\left(~
\begin{tikzpicture}[scale=0.8]
\draw (0.9,0) node[v] (V) {};
\draw [->] (0,0) -- (1.8,0);
\draw[-,blue] (1.8,0.7) .. controls +(-0.6,-0.4) and (0.6,0.9) .. (0,1.4);
\draw[->,blue] (1.8,1.4) .. controls (1.2,0.7) .. (V);
\draw[blue] (V) .. controls (0.6,0.5) .. (0,0.7);
\end{tikzpicture}
 ~\right)\end{multline}

     \item if $r$ and $r + 1$ belong to the same cycle and $r+1\ne \alpha^{\pm1}(r)$, we have
\begin{multline}\label{eq:rec-X2}
    Y\left(~
\begin{tikzpicture}[scale=0.8]
\draw (0.6,0) node[v] (V1) {} (1.2,0) node[v] (V2) {};
\draw  (0.5,-.25) node {\scriptsize$r$}  
 (1.3,-.25) node {\scriptsize$r{+}1$};
\draw [->] (0,0) -- (1.8,0);
\draw[->,blue] (0,0.7) .. controls (0.6,0.5) .. (V2);
\draw[-,blue] (V2) .. controls (0.6,0.9) .. (0,1.4);
\draw[->,blue] (1.8,1.4) .. controls (1.2,0.9) .. (V1);
\draw[-,blue] (V1) .. controls (1.2,0.5) .. (1.8,0.7);
\end{tikzpicture}
 ~\right)=Y\left(~
\begin{tikzpicture}[scale=0.8]
\draw  (0.7,0) node[v] (V1) {} (1.1,0) node[v] (V2) {};
\draw [->] (0,0) -- (1.8,0);
\draw[->,blue] (0,0.7) .. controls (0.4,0.4) .. (V1);
\draw[-,blue] (V1) .. controls (0.4,0.9) .. (0,1.4);
\draw[->,blue] (1.8,1.4) .. controls (1.4,0.9) .. (V2);
\draw[-,blue] (V2) .. controls (1.4,0.4) .. (1.8,0.7);
\end{tikzpicture}
 ~\right)+N^{-2}\left(~ Y\left(~
\begin{tikzpicture}[scale=0.8]
\draw (0.9,0) node[v] (V) {};
\draw [->] (0,0) -- (1.8,0);
\draw[-,blue] (0,0.7) .. controls +(0.5,-0.5) and +(-0.5,-0.5) .. (1.8,0.7);
\draw[->,blue] (1.8,1.4) .. controls +(-0.6,-0.7) .. (V);
\draw[-,blue] (V) .. controls +(-0.3,0.7) .. (0,1.4);
\end{tikzpicture}
 ~\right)-Y\left(~
\begin{tikzpicture}[scale=0.8]
\draw (0.9,0) node[v] (V) {};
\draw [->] (0,0) -- (1.8,0);
\draw[->,blue] (0,0.7) .. controls (0.4,0.5) .. (V);
\draw[-,blue] (V) .. controls (1.4,0.5) .. (1.8,0.7);
\draw[-,blue] (1.8,1.4) .. controls (0.9,0.2) .. (0,1.4);
\end{tikzpicture}
 ~\right)~\right)
 \\
+N^{-1}\left(Y\left(~
\begin{tikzpicture}[scale=0.8]
\draw (0.9,0) node[v] (V) {};
\draw [->] (0,0) -- (1.8,0);
\draw[blue] (0,0.7) .. controls (0.6,0.5) and +(-0.5,-0.7) .. (1.8,1.4);
\draw[<-,blue] (V) .. controls (0.6,0.9) .. (0,1.4);
\draw[-,blue] (V) .. controls (1.2,0.5) .. (1.8,0.7);
\end{tikzpicture}
 ~\right)-Y\left(~
\begin{tikzpicture}[scale=0.8]
\draw (0.9,0) node[v] (V) {};
\draw [->] (0,0) -- (1.8,0);
\draw[-,blue] (1.8,0.7) .. controls +(-0.6,-0.4) and (0.6,0.9) .. (0,1.4);
\draw[->,blue] (1.8,1.4) .. controls (1.2,0.7) .. (V);
\draw[blue] (V) .. controls (0.6,0.5) .. (0,0.7);
\end{tikzpicture}
 ~\right) ~\right)\end{multline}

 \item Finally, in the exceptional case when $\alpha(r+1)=r$ we have
\begin{equation}\label{eq:rec-X3}
Y\left(~
\begin{tikzpicture}[scale=0.8]
\draw (0.6,0) node[v] (V1) {} (1.2,0) node[v] (V2) {};
\draw  (0.6,-.25) node {\scriptsize$r$}  
 (1.2,-.25) node {\scriptsize$r{+}1$};
\draw [->] (0,0) -- (1.8,0);
\draw[->,blue] (0,1) .. controls (0.7,0.6) .. (V2);
\draw[-,blue] (V1) .. controls (1.1,0.6) .. (1.8,1);
\draw[->,blue] (V2) .. controls (0.9,0.2) .. (V1);
\end{tikzpicture}
 ~\right)=
 Y\left(~
\begin{tikzpicture}[scale=0.8]
\draw  (0.6,0) node[v] (V1) {} (1.2,0) node[v] (V2) {};
\draw [->] (0,0) -- (1.8,0);
\draw[->,blue] (0,1) .. controls (0.4,0.6) .. (V1);
\draw[-,blue] (V2) .. controls (1.4,0.6) .. (1.8,1);
\draw[->,blue] (V1) .. controls (0.9,0.3) .. (V2);
\end{tikzpicture}
 ~\right)+
(2N^{-1}-1)\,Y\left(~
\begin{tikzpicture}[scale=0.8]
\draw (0.9,0) node[v] (V) {};
\draw [->] (0,0) -- (1.8,0);
\draw[-,blue] (V) .. controls (1.2,0.5) .. (1.8,1);
\draw[->,blue] (0,1) .. controls (0.6,0.5) .. (V);
\end{tikzpicture}
 ~\right).
\end{equation}
    \end{itemize}
\end{theorem}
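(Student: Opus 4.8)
The plan is to derive each of the six assertions by substituting $S_0=N$, $S_k=p_kP_k(N)$ into the defining axioms of $w_\fso$ (Definition~\ref{def:so}) and then multiplying by the normalising factor $N^{c(\alpha)-m}$. The first three items are bookkeeping. The substitution in the $S_k$ is a ring homomorphism, and both $c(\cdot)$ and the number of legs $m$ are additive under concatenation, invariant under cyclic shift, and constant on cyclic equivalence classes; hence the equality of $Y$ on all permutations in one class and the multiplicativity of $Y$ follow immediately from the corresponding properties of $w_\fso$. For the standard even cycle one feeds the third axiom $w_\fso=S_{2n}$ into the definition of $Y$, with $c=1$ and $m=2n$, and simplifies using the formula~\eqref{e-P} for $P_{2n}$.

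The core is the recurrence. I would write Eq.~\eqref{eq:rec-so1} as $w_\fso(\alpha)=w_\fso(\alpha')+w_\fso(\beta_1)-w_\fso(\beta_2)+w_\fso(\gamma_1)-w_\fso(\gamma_2)$, substitute into every term, and multiply by $N^{c(\alpha)-m}$. The one non-tautological step is to convert each term back into a value of $Y$: a permutation or extended permutation graph $\delta$ on $k$ legs satisfies $w_\fso(\delta)|_{S_0=N,\,S_k=p_kP_k(N)}=N^{\,k-c(\delta)}\,Y(\delta)$, where for an extended graph $c(\delta)$ is the number of cycles of any ordinary permutation graph to which $\delta$ reduces (independent of the choice, since reorienting a cycle preserves its length). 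Hence $Y(\delta)$ appears in the transformed relation with coefficient $N^{\,c(\alpha)-c(\alpha')}$ for $\delta=\alpha'$ and with coefficient $N^{\,c(\alpha)-1-c(\delta)}$ for the four graphs on $m-1$ legs, the signs being those of~\eqref{eq:rec-so1}.

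Everything thus reduces to computing cycle numbers. Since $\alpha'$ is the conjugate of $\alpha$ by the transposition $(r\ r{+}1)$, one has $c(\alpha')=c(\alpha)$ always, which is why $Y(\alpha')$ has coefficient $1$ in all three cases. For $\beta_1,\beta_2,\gamma_1,\gamma_2$, deleting the legs $r$ and $r+1$ cuts the cycle (or cycles) through them into directed paths, and each of the four surgeries reglues the free path-ends in a prescribed way; inspecting the four gluing patterns shows that the number of cycles produced depends only on whether $r$ and $r+1$ lay in the same cycle of $\alpha$ or in two distinct ones. If they lie in different cycles, all four graphs have cycle number $c(\alpha)-1$, so every coefficient is $N^0=1$ and one recovers Eq.~\eqref{eq:rec-X1}. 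If they lie in the same cycle and $r+1\ne\alpha^{\pm1}(r)$, the surgeries $\beta_1,\beta_2$ raise the cycle number to $c(\alpha)+1$ and $\gamma_1,\gamma_2$ leave it equal to $c(\alpha)$, yielding the factors $N^{-2}$ and $N^{-1}$ of Eq.~\eqref{eq:rec-X2}.

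The exceptional case $\alpha(r+1)=r$ is where I expect the real difficulty. Then the outgoing strand of $r+1$ and the incoming strand of $r$ are one and the same internal edge, so after it is severed the four surgeries no longer give four distinct graphs: two of them degenerate to $\hat\beta$ carrying a trivial extra component (a fixed point, respectively a free loop, the latter evaluated through $S_0\mapsto N$), and the two extended-graph terms reduce to $\hat\beta$ up to a sign dictated by the head/tail rule. Collecting the resulting powers of $N$ (from the normalisation and from $S_0=N$) together with these signs is exactly what produces the single coefficient $2N^{-1}-1$ of Eq.~\eqref{eq:rec-X3}. Pinning down the trivial-component values and the extended-graph signs in this degenerate configuration — and verifying that the orientation ambiguity of $\gamma_1,\gamma_2$ is harmless, as guaranteed by the reorientation symmetry of $w_\fso$ — is the main obstacle; the rest is a routine specialisation.
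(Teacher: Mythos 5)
Your proposal is correct and follows exactly the route the paper takes, which is why the paper dispatches this theorem with the single phrase ``Specializing the defining relations for the $\fso$ weight system to the prechromatic substitution we immediately obtain'': one substitutes $S_0=N$, $S_k=p_kP_k(N)$ into Eq.~(\ref{eq:rec-so1}), multiplies by $N^{c(\alpha)-m}$, and reads off the coefficients $N^{c(\alpha)-1-c(\delta)}$ from the cycle-count changes ($c(\alpha')=c(\alpha)$; merging gives $c(\alpha)-1$, splitting gives $c(\alpha)+1$, the $\gamma$-surgeries in the same-cycle case give $c(\alpha)$), with the degenerate case $\alpha(r+1)=r$ collapsing the five terms into $\alpha'$ plus $\hat\beta$-contributions weighted by $2N^{-1}-1$ exactly as you describe. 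Your bookkeeping is the paper's entire (implicit) argument, spelled out.
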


Our first main result is

\begin{theorem}\label{thm:Xhopf}
    The leading term~$Y_0$ of the  prechromatic substitution determines
    filtered Hopf algebra homomorphisms from the Hopf algebras~$\cA^+,\cA^-,\cA^\pm$
    of the rotational Hopf algebras of, respectively, positive, negative, and monotone
    permutations to the Hopf algebra $\mathbb{C}[p_2,p_4,\dots]$ of polynomials
    in infinitely many variables.
\end{theorem}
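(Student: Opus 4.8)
The plan is to verify the three structural properties of a filtered Hopf algebra homomorphism --- compatibility with the product, compatibility with the coproduct, and compatibility with the filtration --- for the linear map $Y_0$ restricted to each of $\cA^+$, $\cA^-$, and $\cA^\pm$. First I would observe that $Y$ itself (hence its leading coefficient $Y_0$) is well-defined on cyclic equivalence classes and is multiplicative under concatenation, both of which are immediate from the Recurrence Rule for the prechromatic substitution; multiplicativity of $Y$ gives multiplicativity of $Y_0$ since the top-degree-in-$N$ part of a product is the product of the top parts. The filtration on $\cA^\pm$ (and its subalgebras) should be taken to be the one induced by the degree $m - c(\alpha)$, or equivalently the filtration by which $Y$ is adapted so that $Y_0$ lands in a bounded-degree piece of $\mathbb{C}[p_2,p_4,\dots]$; I would make this filtration explicit and check that each term produced by the recurrence does not raise it, which is exactly what the $N^{-1}$ and $N^{-2}$ prefactors in Eqs.~(\ref{eq:rec-X2}) and~(\ref{eq:rec-X3}) are engineered to guarantee.

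The substantive point is closure: I must check that the recurrence, when applied to a \emph{positive} (resp.\ negative, resp.\ monotone) permutation and used to compute $Y_0$, only ever invokes $Y_0$ on positive (resp.\ negative, resp.\ monotone) permutations, so that $Y_0$ is genuinely a function on $\cA^+$ (resp.\ $\cA^-$, $\cA^\pm$) computable internally. Here I would argue leg-by-leg: choosing the pair $r, r+1$ appropriately (using cyclic invariance to relocate the pair), the first term $\alpha'$ on the right of each recurrence is a transposition-conjugate that preserves positivity/negativity of each cycle because restrictions of positive/negative permutations are again positive/negative (a fact stated in the Definition of monotone permutations); the remaining terms $\beta_1,\beta_2,\gamma_1,\gamma_2$ are on $m-1$ legs and their cycle types are obtained by deleting a leg or merging/splitting cycles in a way that, again by the restriction-stability of ascents, stays inside the monotone class. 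The crucial case distinctions are (i) $r,r+1$ in different cycles, (ii) same cycle with $r+1\neq\alpha^{\pm1}(r)$, and (iii) the exceptional $\alpha(r+1)=r$; in each I would check that the extended permutation graphs $\gamma_1,\gamma_2$, once resolved into honest permutations via the sign convention, are monotone, and that the relevant sub-terms in cases (ii)--(iii) which could leave the class are precisely those killed at leading order by the $N^{-1}$ factors. This last coincidence --- that the ``bad'' terms are exactly the power-suppressed ones --- is the main obstacle and is where the argument genuinely uses the structure of the $\fso$ recurrence rather than formal Hopf-algebra nonsense.

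For the coproduct compatibility, I would show $\mu \circ Y_0 = (Y_0 \otimes Y_0) \circ \mu$ on each of the three Hopf algebras. The cleanest route is to prove it first for generators that are single cycles: for a single positive or negative cycle of even length the coproduct splits it trivially (a primitive-like behavior modulo the decomposable part), and $Y_0$ of a standard even cycle is $p_{2n}$ times the leading term of $P_{2n}(N)$, which is $p_{2n} N^{2n-1}/N$ adjusted by the $N^{c-m}$ normalization --- I would pin down the exact value and check the two sides of the coproduct identity agree on these generators. Then, since both $\mu$ and $Y_0$ are algebra homomorphisms (the former by construction of the Hopf algebra, the latter by the multiplicativity established above) and the single cycles generate $\cA^\pm$ as an algebra, the identity extends to all of $\cA^\pm$ by multiplicativity; restricting to $\cA^+$ and $\cA^-$ gives those cases for free since they are Hopf subalgebras. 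I expect the coproduct step to be essentially formal once multiplicativity and the single-cycle computation are in hand; the real work, and the only place where one can go wrong, is the closure/filtration bookkeeping in the second paragraph, so I would organize the write-up to isolate that as a lemma (``$Y_0$ on a monotone permutation is computed by a recurrence internal to $\cA^\pm$ and non-increasing in the filtration'') and then deduce Theorem~\ref{thm:Xhopf} from it together with the single-cycle values.
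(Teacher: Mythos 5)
Your proposal has a genuine gap in the coproduct step, and it is located precisely where you declare the argument ``essentially formal.'' You propose to verify $(Y_0\otimes Y_0)\circ\mu=\Delta\circ Y_0$ on single cycles and then extend to all of $\cA^\pm$ ``by multiplicativity, since the single cycles generate $\cA^\pm$ as an algebra.'' They do not. The product on these Hopf algebras is concatenation, and a concatenation product of cyclic permutations is (up to cyclic equivalence) a permutation whose disjoint cycles pairwise do not interlace; its intersection graph is edgeless. Any monotone permutation with two interlacing cycles --- already the chord diagram with two crossing chords --- is indecomposable and lies outside the subalgebra generated by single cycles. So the comultiplicativity identity cannot be propagated from generators, and this is exactly the nontrivial content of the theorem. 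The paper handles it by a double induction (on $m$ and on a lexicographic order of canonical forms): it shows that $\theta=(Y_0\otimes Y_0)\circ\mu$ satisfies the \emph{same} recurrences as $\Delta\circ Y_0$ under each of the three cases of the prechromatic recurrence (Lemma~\ref{lem:X}), by splitting the sum over partitions $I\sqcup J=V(\alpha)$ according to which side the affected cycles fall on and invoking the recurrence on the restrictions $\alpha|_I$, $\alpha|_J$; agreement on standard cycles then forces agreement everywhere. Some version of this inductive compatibility argument is unavoidable.

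Conversely, the difficulty you single out as the ``main obstacle'' --- closure of the recurrence within the positive/negative/monotone classes --- is not actually needed and is in fact false: applying recurrence~(\ref{eq:rec-X1}) to a monotone permutation produces terms $\beta_1,\beta_2,\gamma_1,\gamma_2$ with a merged, generally nonmonotone cycle (the paper uses exactly this in the proof of Theorem~\ref{th-csp}). The map $Y_0$ is defined on all of $\cA$, the recurrence is run in $\cA$, and the theorem is obtained by restricting the resulting homomorphism to the Hopf subalgebras $\cA^+,\cA^-,\cA^\pm$; no leg-by-leg closure analysis is required. Your points about multiplicativity and the filtration are fine but are the easy part; as written, the proposal does not prove the coproduct compatibility.
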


\begin{proof}

Below, we will use a linear order on the set of 
equivalence classes of permutations in~$\S_m$, that is,
on the set of basic vectors
in the spaces $\cA_m$, defined in~\cite{KKL24}. 
Then we argue by a double induction
with respect to~$m$ and this linear ordering.

Associate to the equivalence class $[\alpha]$ 
of a permutation~$\alpha$ its
canonical form in the following
way. Write each permutation in the equivalence class
as a product of disjoint cycles, where for each cycle 
we chose the lexicographically minimal presentation (the one with the smallest first element),
and write the cycles in the lexicographic order (that is, the order in which their
first elements increase). Now, among all the permutations 
in the equivalence class, choose the permutation with the lexicographically smallest canonical
form. This will be the \emph{canonical form}~$\bar\alpha$ of~$[\alpha]$. The lexicographic ordering of canonical
forms defines a linear ordering on the basic vectors in~$A_m$.

The multiplicativity $Y_0(\alpha_1\sharp \alpha_2)=Y_0(\alpha_1)Y_0(\alpha_2)$ is trivial, since
the weight system $w_{\fso}$ is multiplicative.

The comultiplicativity means that $(Y_0\otimes Y_0)\circ\mu=Y_0|_{p_k=1\otimes p_k+p_k\otimes1}$. It can be proved as follows. Denote the mapping on the left by $\theta$,
so that, for a given permutation $\alpha$, we have
\[
\theta:\alpha\mapsto \sum_{I\sqcup J=V(\alpha)}
Y_0(\alpha|_I)\otimes Y_0(\alpha|_J),
\]
where the summation is carried over all ordered partitions of the
set~$V(\alpha)$ of disjoint cycles of~$\alpha$ into two
disjoint subsets. 

\begin{lemma}\label{lem:X}
    We have the following recurrence for $\theta$:
    \begin{itemize}
        \item under (\ref{eq:rec-X1}), we have
        \[
        \theta(\alpha)=\theta(\alpha')+\theta(\beta_1)-\theta(\beta_2)+\theta(\gamma_1)-\theta(\gamma_2)
        \]
        \item under (\ref{eq:rec-X2}), we have
        \[
        \theta(\alpha)=\theta(\alpha')
        \]
        \item under (\ref{eq:rec-X3}), we have
        \[
        \theta(\alpha)=\theta(\alpha')-\theta(\gamma_2)
        \]
    \end{itemize}
\end{lemma}
For the standard cyclic permutation of length~$m$, $Y_0$ equals $p_m$, which forms the induction base. For any other permutation, one can choose two neighboring legs 
such that the permutations on the right side are lexicographically smaller.

\noindent
\emph{Proof} of Lemma~\ref{lem:X}.

Consider the possibilities case by case. In all the cases we assume that the
recurrence is applied to the canonical form~$\bar\alpha$ of the permutation~$\alpha$.
\begin{itemize}
    \item If the pair $r,r+1$ is like in recursion~(\ref{eq:rec-X1}), so that $r$  and $r+1$ belong to two different
    disjoint cycles of~$\alpha$, then let $v_1\in V(\alpha)$
    be the cycle containing~$r$, and $v_2\in V(\alpha)$ 
    be the cycle containing~$r+1$.  The disjoint cycles~$V(\alpha')$
    naturally correspond one-to-one to the disjoint cycles
    $V(\alpha)$, while each of the permutations $\beta_1,\beta_2,\gamma_1,\gamma_2$
    has one disjoint cycle less: the pair of cycles $v_1,v_2$ is replaced by a single cycle. Now, we have
\begin{eqnarray*}
   && \theta(\alpha)-\theta(\alpha')-\theta(\beta_1)+\theta(\beta_2)-\theta(\gamma_1)+\theta(\gamma_2)=\\
   && \sum_{{I\sqcup J=V(\alpha)\atop \{v_1,v_2\}\subset I}}
    Y_0(\alpha|_I)\otimes(Y_0(\alpha|_J)-Y_0(\alpha'|_J)
    -Y_0(\beta_1|_J)+Y_0(\beta_2|_J)-Y_0(\gamma_1|_J)+Y_0(\gamma_2|_J))\\
   && +\sum_{{I\sqcup J=V(\alpha)\atop \{v_1,v_2\}\subset J}}
   (Y_0(\alpha|_I)-Y_0(\alpha'|_I)
    -Y_0(\beta_1|_I)+Y_0(\beta_2|_I)-Y_0(\gamma_1|_I)+Y_0(\gamma_2|_I)) \otimes Y_0(\alpha|_J)\\
    &&+\sum_{{I\sqcup J=V(\alpha)\atop v_1\in I,v_2\in J
    \text{ or }v_2\in I,v_1\in J}}(Y_0(\alpha|_I)\otimes
    Y_0(\alpha|_J)-Y_0(\alpha'|_I)
    \otimes Y_0(\alpha'|_J)).
\end{eqnarray*}
Each of the three summands on the right is zero: for the first two
sums the expression in brackets is~$0$ due to recurrence~(\ref{eq:rec-so1}) applied to the permutations $\alpha|_J$ and $\alpha|_I$,
respectively,
while the third sum vanishes because $\alpha|_I=\alpha'|_I,
\alpha|_J=\alpha'|_J$.
 \item If the pair $r,r+1$ is like in recurrence~(\ref{eq:rec-X2}), so that $r$ and $r+1$ belong to the same
    disjoint cycle of~$\alpha$, then let $v\in V(\alpha)$
    be the cycle containing this pair of elements.  
    The disjoint cycles~$V(\alpha')$
    naturally correspond one-to-one to the disjoint cycles
    $V(\alpha)$. We claim that for an arbitrary subset $I\subset V(\alpha)$ the values $Y_0(\alpha|_I)$ and $Y_0(\alpha'|_I)$
    coincide. Indeed, if $I$ does not contain~$v$, then 
    $\alpha|_I=\alpha'|_I$. If~$I$ contains~$v$, then the 
    two restrictions coincide no longer. However, in this case
    the permutations $\alpha|_I$ and $\alpha'|_I$ are related by 
    the same recurrence, and the equality
    follows from the induction hypothesis.
     \item If the pair $r,r+1$ is like in recursion~(\ref{eq:rec-X3}), so that $r$ and $r+1$ are two successive elements
     in the same disjoint cycle, then let $v\in V(\alpha)$
    be this cycle.  The disjoint cycles~$V(\alpha')$ and $V(\alpha_2)$
    naturally correspond one-to-one to the disjoint cycles
    $V(\alpha)$, while the permutation $\alpha_1$
    has one disjoint cycle more: the cycle $v$ is replaced by two cycles. Thus, for any subset $I\subset V(\alpha)$ not containing~$v$, we have $\alpha|_I=\alpha'|_I=\beta_2|_I$.
    As a consequence,
\begin{eqnarray*}
   && \theta(\alpha)-\theta(\alpha')+\theta(\beta_2)=\\
   && \sum_{{I\sqcup J=V(\alpha)\atop v\in I}}
   (Y_0(\alpha|_I)-Y_0(\alpha'|_I)
   +Y_0(\beta_2|_I))\otimes  Y_0(\alpha|_J)\\
   && \sum_{{I\sqcup J=V(\alpha)\atop v\in J}}
    Y_0(\alpha|_J)\otimes(Y_0(\alpha|_J)-Y_0(\alpha'|_J)
   +Y_0(\beta_2|_J)).
\end{eqnarray*}
Each of the two summands on the right vanishes: the expression in brackets is~$0$ due to recurrence~(\ref{eq:rec-X3}) applied to the permutations $\alpha|_J$ and $\alpha|_I$,
respectively.
\end{itemize}
Lemma~\ref{lem:X} is proved, which completes the proof of Theorem~\ref{thm:Xhopf}.
\end{proof}

\subsection{Values of $Y_0$ on cyclic permutations}
In this section we compute explicitly the value of~$Y_0$
on cyclic permutations. In~\cite{KKL24}, the following formula is obtained for the
leading term $X_0$ of the universal $\fgl$ 
weight system on a cyclic permutation $\alpha\in\S_m$ having $m-k$ ascents, $a(\alpha)=m-k$,
under the substitution $C_k=p_kN^{k-1}$, $k=1,2,\dots$:

\begin{equation}\label{e-X0}
X_0(\alpha)=p_{m}-{k-1\choose1}p_{m-1}+{k-1\choose2}p_{m-2}-\dots+(-1)^{k-1}{k-1\choose k-1}p_{m-k+1}.
\end{equation}

In order to describe the values of~$Y_0$ on cyclic permutations,
let us introduce the following linear operator $A$ on the
space $\mathbb{C}[t]$ of polynomials in the single variable~$t$
as the result of iterating the operator
$$
t^n\mapsto\left\{\begin{array}{cl}
0&n=1\\
t^n&n\text{ is even}\\
\frac{t^2}2(t^{n-2}-(t-1)^{n-2})&\text{otherwise}
\end{array}\right\}
$$
until obtaining an even polynomial,
so that
$$
A:t\mapsto 0;\qquad A:t^3\mapsto \frac12t^2;\qquad A:t^5\mapsto 
\frac32t^4-\frac14t^2;\qquad\dots
$$
The operator~$A$ is a projection of the space $\mathbb{C}[t]$ to
the subspace of even polynomials.

\begin{proposition}
    If $\alpha\in S_m$ is a cyclic permutation, that is, $c(\alpha)=1$,
    then the value $Y$ of the prechromatic invariant on it
    has the form 
    $$
    Y(\alpha)=Y_0(p_2,p_4\dots)+
    Y_1(p_2,p_4\dots)N^{-1}+\dots,
    $$
where $Y_0$ is a linear polynomial in $p_2,p_4\dots$ obtained by making the following
substitution for the variables~$p_{2k-1}$ with odd indices to the value~$X_0(\alpha)$
given by Eq.~(\ref{e-X0}) above:
%      For a cyclic permutation $\alpha\in S_m$ having $m-k$ ascents, $a(\alpha)=m-k$,
%    the coefficient $Y_0$ is 
%    $$Y_0(\alpha)=p_{m}-{k-1\choose1}p_{m-1}+{k-1\choose2}p_{m-2}-\dots+(-1)^{k-1}{k-1\choose k-1}p_{m-k+1}.$$
%    And the odd terms are defined by induction as follow
\begin{eqnarray*}
        &&p_1:=0\\
        &&p_3:=\frac{1}{2}p_2\\
        &&p_5:=\frac{3}{2}p_4-\frac{1}{4}p_2\\
        &&\dots\\
        &&p_{2n+1}:=A(t^{2n+1})|_{t^{2k}:=p_{2k},k=1,2,\dots}
%\frac{1}{2}\left({2k-1\choose1}p_{2k}-{2k-1\choose2}p_{2k-1}-\dots+(-1)^{2k}{2k-1\choose 2k-1}p_{2}\right).
    \end{eqnarray*}

\begin{remark}
     Under substitution $p_2=2x,p_m=x$, for $m=4,6\dots$, permutation $\alpha\in\S_m$ having $m-k$ ascents, we have \begin{itemize}
         \item $p_{2n+1}=x$ ;
         \item for $1<k<m$, $Y_0(\alpha)$ vanish;
         \item for positive permutation $\alpha$, which means $k=1$, we have $Y_0=x$;
         \item for negative permutation $\alpha$, which means $k=m-1$, we have $Y_0=(-1)^{m}x$.
     \end{itemize}
\end{remark}
\end{proposition}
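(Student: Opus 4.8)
The plan is to establish the Proposition by reducing the computation of $Y_0$ on a cyclic permutation to the already-known formula~\eqref{e-X0} for $X_0$, via a careful comparison of the recurrence rules for the two prechromatic substitutions. First I would observe that the recurrences \eqref{eq:rec-X1}--\eqref{eq:rec-X3}, when one extracts the leading ($N^0$) coefficient, simplify dramatically: in \eqref{eq:rec-X2} the correction terms carry factors $N^{-1}$ and $N^{-2}$, so $Y_0(\alpha)=Y_0(\alpha')$ whenever $r,r+1$ lie in the same cycle with $r+1\ne\alpha^{\pm1}(r)$; in \eqref{eq:rec-X3} only the $(-1)$ part of $2N^{-1}-1$ survives, giving $Y_0(\alpha)=Y_0(\alpha')-Y_0(\gamma_2)$; and in \eqref{eq:rec-X1} (the different-cycle case, which does not arise for a single cyclic permutation but is needed if one wants the statement for monotone permutations) all terms survive with coefficient $\pm1$. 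The key point is that for a \emph{cyclic} permutation $\alpha$, the only relevant moves are \eqref{eq:rec-X2} (which does nothing to $Y_0$) and \eqref{eq:rec-X3}. Comparing with the $\fgl$ side, the analogous $X_0$-recurrence from~\cite{KKL24} has exactly the same shape on cyclic permutations except that the move corresponding to \eqref{eq:rec-X3} produces the single leg $v_1$ (a length-one cycle) with coefficient $1$ rather than $-1$, and the $\fgl$ analogue of $\gamma_2$ is an ordinary permutation graph rather than an extended one.

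The second step is to track what the extended permutation graphs $\gamma_1,\gamma_2$ become when $\alpha$ is cyclic. Using the head/tail sign convention from Definition (extended permutation graph), reducing $\gamma_2$ to an ordinary permutation graph introduces a sign $(-1)^{\ell}$ where $\ell$ is the length of the cycle that gets its orientation reversed; since for a cyclic $\alpha$ the two elements $r,r+1$ lie on one cycle, splitting off a leg at the exceptional position of~\eqref{eq:rec-X3} and reorienting yields a cyclic permutation of length $m-1$ together with an explicit sign. I expect this bookkeeping to show that, on cyclic permutations, $Y_0$ obeys a recurrence of the form $Y_0(\text{length } m \text{ cyclic, } k \text{ non-ascents}) = Y_0(\text{length } m{-}1, \text{same or } k{-}1) - Y_0(\text{length } m{-}2,\dots)$ — i.e. the same three-term pattern that produces the binomial coefficients in~\eqref{e-X0}, but now with the length dropping by $2$ in the correction term and with the base case being $Y_0 = p_m$ for the standard even $m$-cycle and $Y_0$ a \emph{proper} even polynomial otherwise. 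Equivalently — and this is the cleanest formulation — I would prove that $Y_0(\alpha)$ is obtained from $X_0(\alpha)$ by the substitution $p_{2k-1}\mapsto A(t^{2k-1})|_{t^{2j}\mapsto p_{2j}}$ by checking that the right-hand side of this claimed identity satisfies the same $Y_0$-recurrence and the same base cases; the operator $A$ is precisely designed so that $A(t^n)$ for odd $n$ satisfies $A(t^n) = \tfrac{t^2}{2}\bigl(t^{n-2}-(t-1)^{n-2}\bigr)$ followed by iteration, which matches the length-drop-by-$2$-with-sign structure coming from~\eqref{eq:rec-X3}.

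For the Remark I would substitute $p_2 = 2x$, $p_m = x$ for $m\ge 4$ even into the formulas just derived. First one checks $p_{2n+1} = x$: this is the statement that $A(t^{2n+1})$ evaluated at $t^2 = 2x$, $t^{2j} = x$ for $j\ge 2$ equals $x$; one proves it by induction using the recursion defining $A$, noting that $\tfrac{t^2}{2}\bigl(t^{n-2}-(t-1)^{n-2}\bigr)$ telescopes appropriately under this evaluation (the factor $t^2/2 = x$ and the bracket evaluates to $1$ after the substitution, by a direct check that under $t^{2j}\mapsto x$, $t^{2j+1}\mapsto x$ the polynomial identity $t^{n-2}-(t-1)^{n-2}$ contributes exactly $1$). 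Then, substituting into~\eqref{e-X0}, for a cyclic permutation with $m-k$ ascents we get $Y_0(\alpha) = X_0(\alpha)|_{\text{all } p_i = x,\ \text{except } p_2=2x} = \bigl(1 - \binom{k-1}{1} + \binom{k-1}{2} - \cdots + (-1)^{k-1}\bigr)x$ plus a correction from the $p_2$-term when $m-j = 2$ appears among the indices; the alternating binomial sum $\sum_{i=0}^{k-1}(-1)^i\binom{k-1}{i}$ equals $(1-1)^{k-1} = 0$ for $k\ge 2$ and equals $1$ for $k=1$, which immediately gives $Y_0 = x$ for positive permutations and $Y_0 = 0$ for $1 < k < m-1$; the $k = m-1$ (negative) case needs the boundary term where the index reaches $m-k+1 = 2$, so the coefficient picks up the extra $x$ from $p_2 = 2x$, shifting the vanishing sum to $(-1)^{m-2}\binom{m-2}{m-2}x \cdot(\text{with the } p_2 \text{ doubling})$, which one computes to be $(-1)^m x$. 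The main obstacle I anticipate is the sign/orientation bookkeeping in step two: correctly determining, for a \emph{cyclic} permutation at the exceptional position of~\eqref{eq:rec-X3}, which cycle of the extended graph $\gamma_2$ must be reversed and hence the precise power of $-1$, since getting this sign wrong would flip the alternating structure and break the identification with $A$. Everything else — the induction on $(m, \text{linear order})$, the collapse of the $N^{-1}$, $N^{-2}$ terms, and the final substitution arithmetic — is routine once that sign is pinned down.
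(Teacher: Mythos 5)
Your first step is sound and matches the paper: at leading order in $N$, recurrence (\ref{eq:rec-X2}) gives $Y_0(\alpha)=Y_0(\alpha')$ and (\ref{eq:rec-X3}) gives $Y_0(\alpha)=Y_0(\alpha')-Y_0(\beta)$, and an induction (choosing the minimal $r$ with $\alpha(r)>r+1$ and applying the appropriate rule at the legs $\alpha(r)-1,\alpha(r)$) together with Pascal's rule yields formula (\ref{e-X0}) for $Y_0$ as a polynomial in all of $p_1,p_2,p_3,\dots$. But two things are off. First, the correction term $\beta$ in (\ref{eq:rec-X3}) is a cyclic permutation of $m-1$ elements, not $m-2$: there is no ``length-drop-by-two'' structure in the recurrence. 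Moreover, your parenthetical claim that the $\fgl$ analogue of this move carries coefficient $+1$ while the $\fso$ one carries $-1$ would, if true, contradict the very statement you are proving (that $Y_0$ is given by the \emph{same} formula (\ref{e-X0}) as $X_0$); in fact both invariants satisfy $F(\alpha)=F(\alpha')-F(\beta)$ with $\beta$ on $m-1$ legs, and that is what produces the alternating binomial coefficients.

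Second, and this is the genuine gap: the substitution rules $p_{2n+1}:=A(t^{2n+1})|_{t^{2k}:=p_{2k}}$ cannot be extracted from iterating the recurrence at all, whereas you try to justify the operator $A$ by matching it to a (nonexistent) drop-by-two structure of (\ref{eq:rec-X3}). The factor $\tfrac12$ in the definition of $A$ can only come from a self-consistency equation, and the paper gets it from the cycle-inversion symmetry of $w_\fso$: for the standard odd cycle, $Y_0(\sigma_{2k+1}^{-1})=(-1)^{2k+1}Y_0(\sigma_{2k+1})=-p_{2k+1}$, while $\sigma_{2k+1}^{-1}$ has exactly one ascent, so (\ref{e-X0}) gives $Y_0(\sigma_{2k+1}^{-1})=p_{2k+1}-\binom{2k-1}{1}p_{2k}+\cdots+(-1)^{2k-1}p_2$. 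Equating the two expressions yields $2p_{2k+1}=\binom{2k-1}{1}p_{2k}-\cdots$, i.e.\ $p_{2k+1}=\tfrac{t^2}{2}\bigl(t^{2k-1}-(t-1)^{2k-1}\bigr)$ under $p_n=t^n$, which after iteration is exactly $A(t^{2k+1})$. You never invoke this symmetry (nor the fact that $S_1=0$ forces $p_1=0$), so the second half of the Proposition --- precisely the part where the operator $A$ enters --- is unproved in your write-up. Your verification of the Remark by alternating binomial sums is fine once the substitution rules are in place.
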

\begin{proof}
    Let us prove this by induction.

In the first part, we prove that $Y_0(\alpha)$ as the polynomial of $p_1,p_2,p_3\dots$ 
is given by the same formula Eq.~(\ref{e-X0}) as $X_0(\alpha)$.

Recall the recurrences (\ref{eq:rec-X2}) and (\ref{eq:rec-X3}).
\begin{itemize}
         \item if $r$ and $r + 1$ belong to the same cycle and $r+1\ne \alpha^{\pm1}(r)$, we have
\begin{equation}\label{eq:X0-samecyc1}
    Y_0\left(~
\begin{tikzpicture}[scale=0.8]
\draw (0.6,0) node[v] (V1) {} (1.2,0) node[v] (V2) {};
\draw  (0.5,-.25) node {\scriptsize$r$}  
 (1.3,-.25) node {\scriptsize$r{+}1$};
\draw [->] (0,0) -- (1.8,0);
\draw[->,blue] (0,0.7) .. controls (0.6,0.5) .. (V2);
\draw[-,blue] (V2) .. controls (0.6,0.9) .. (0,1.4);
\draw[->,blue] (1.8,1.4) .. controls (1.2,0.9) .. (V1);
\draw[-,blue] (V1) .. controls (1.2,0.5) .. (1.8,0.7);
\end{tikzpicture}
 ~\right)=Y_0\left(~
\begin{tikzpicture}[scale=0.8]
\draw  (0.7,0) node[v] (V1) {} (1.1,0) node[v] (V2) {};
\draw [->] (0,0) -- (1.8,0);
\draw[->,blue] (0,0.7) .. controls (0.4,0.4) .. (V1);
\draw[-,blue] (V1) .. controls (0.4,0.9) .. (0,1.4);
\draw[->,blue] (1.8,1.4) .. controls (1.4,0.9) .. (V2);
\draw[-,blue] (V2) .. controls (1.4,0.4) .. (1.8,0.7);
\end{tikzpicture}
 ~\right)\end{equation}

 \item in the exceptional case when $\alpha(r+1)=r$ we have
\begin{equation}\label{eq:X0-samecyc2}
Y_0\left(~
\begin{tikzpicture}[scale=0.8]
\draw (0.6,0) node[v] (V1) {} (1.2,0) node[v] (V2) {};
\draw  (0.6,-.25) node {\scriptsize$r$}  
 (1.2,-.25) node {\scriptsize$r{+}1$};
\draw [->] (0,0) -- (1.8,0);
\draw[->,blue] (0,1) .. controls (0.7,0.6) .. (V2);
\draw[-,blue] (V1) .. controls (1.1,0.6) .. (1.8,1);
\draw[->,blue] (V2) .. controls (0.9,0.2) .. (V1);
\end{tikzpicture}
 ~\right)=
 Y_0\left(~
\begin{tikzpicture}[scale=0.8]
\draw  (0.6,0) node[v] (V1) {} (1.2,0) node[v] (V2) {};
\draw [->] (0,0) -- (1.8,0);
\draw[->,blue] (0,1) .. controls (0.4,0.6) .. (V1);
\draw[-,blue] (V2) .. controls (1.4,0.6) .. (1.8,1);
\draw[->,blue] (V1) .. controls (0.9,0.3) .. (V2);
\end{tikzpicture}
 ~\right)-Y_0\left(~
\begin{tikzpicture}[scale=0.8]
\draw (0.9,0) node[v] (V) {};
\draw [->] (0,0) -- (1.8,0);
\draw[-,blue] (V) .. controls (1.2,0.5) .. (1.8,1);
\draw[->,blue] (0,1) .. controls (0.6,0.5) .. (V);
\end{tikzpicture}
 ~\right).
\end{equation}
\end{itemize}
Let $\alpha$ be the minimal cyclic permutation for which the assertion of the
Proposition is not proved yet.  Let $r$  be the minimal element such that $\alpha(r)> r + 1$. (If such an element $r$ does not exist, then $\alpha$ is a standard cycle, for which the assertion of the Proposition is obvious.) There are two possibilities:
\begin{itemize}
    \item if $\alpha(\alpha(r))\ne \alpha(r)-1$, then
    apply \eqref{eq:X0-samecyc2} to the legs 
    $\alpha(r)-1,\alpha(r)$. The value of~$Y_0$ does not change;
    \item if $\alpha(\alpha(r))=\alpha(r)-1$, then apply \eqref{eq:X0-samecyc1} 
    to the legs 
    $\alpha(r)-1,\alpha(r)$. The number of ascents in the permutation $\alpha'$, the 
    first permutation on the right, is one more than in $\alpha$, and the second 
    element $\alpha_2$ is a cyclic permutation of $m-1$ elements, having
    the same  number of 
    ascents. By induction:
    \begin{eqnarray*}
    Y_0(\alpha)=&(p_m-{m-a(\alpha)-2\choose1}p_{m-1}+{m-a(\alpha)-2\choose2}p_{m-2}-\dots)+\\
    &(p_{m-1}-{m-a(\alpha)-2\choose1}p_{m-2}+{m-a(\alpha)-2\choose2}p_{m-3}-\dots)\\
    =&(p_m-{m-a(\alpha)-1\choose1}p_{m-1}+{m-a(\alpha)-1\choose2}p_{m-2}-\dots)
    \end{eqnarray*}
\end{itemize}
By induction, we have
\[
    Y_0(\alpha)=p_{m}-{k-1\choose1}p_{m-1}+{k-1\choose2}p_{m-2}-\dots+(-1)^{k-1}{k-1\choose k-1}p_{m-k+1}.
\]

Next we express the variables with odd indices $p_{2k+1}$ as polynomials in $p_1,p_2,\dots,p_{2k}$ and prove the substitution formula.

We have $p_1=0$, since $S_1=0$.

For the standard cycle $\sigma_{2k+1}=(1,2,\dots,2k+1)\in\S_{2k+1}$ of length $2k+1$, we have 
\[Y_0(\sigma^{-1}_{2k+1})=(-1)^{2k+1}Y_0(\sigma_{2k+1})=-p_{2k+1}.\]
On the other hand, $\sigma^{-1}_{2k+1}$ has one ascent, hence
\[
Y_0(\sigma^{-1}_{2k+1})=p_{2k+1}-{2k-1\choose1}p_{2k}+{2k-1\choose2}p_{2k-1}-\dots+(-1)^{2k-1}{2k-1\choose 2k-1}p_{2}.
\]
Applying the substitution $p_n=t^n,n=1,2,\dots,2k$, we obtain
\begin{align*}
    p_{2k+1}=&\frac{1}{2}\left({2k-1\choose1}p_{2k}-{2k-1\choose2}p_{2k-1}-\dots+(-1)^{2k-2}{2k-1\choose 2k-1}p_{2}\right)\\
    =&\frac{1}{2}\left({2k-1\choose1}t^{2k}-{2k-1\choose2}t^{2k-1}-\dots+{2k-1\choose 2k-1}t^{2}\right)\\
    =&\frac{t^2}{2}(t^{2k-1}-(t-1)^{2k-1}).
\end{align*}

\end{proof}

\subsection{Chromatic substitution} 

Our second main result is

\begin{theorem}[chromatic substitution]\label{th-cscd}
The value of the coefficient of~$N^0$ (which is the leading term in~$N$) in~$Y(D)(N,2x,x,\dots)$, 
on a chord diagrams $D$
is $2^{|D|}$ times the
chromatic polynomial of the corresponding intersection graph.
\end{theorem}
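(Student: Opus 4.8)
The quantity to be computed is exactly $Y_0(D)$ evaluated at $p_2=2x$, $p_k=x$ ($k\ge4$ even), because $Y(D)(N,2x,x,\dots)=\sum_k Y_k(D)(2x,x,\dots)N^{-k}$ and the coefficient of $N^0$ is the $k=0$ term. So I must show $\phi(D):=Y_0(D)|_{p_2=2x,\,p_k=x}=2^{|D|}\chi_{G(D)}(x)$ for every chord diagram $D$, where $G(D)$ is its intersection graph. One useful preliminary observation: both sides are Hopf algebra homomorphisms $\cB\to\mathbb{C}[x]$. Indeed $Y_0|_\cB$ is a Hopf map by Theorem~\ref{thm:Xhopf}, and the chromatic substitution sends each primitive generator $p_k$ to a scalar multiple of the primitive element $x$, hence is a Hopf map $\mathbb{C}[p_2,p_4,\dots]\to\mathbb{C}[x]$; on the other side $D\mapsto 2^{|D|}\chi_{G(D)}(x)$ is multiplicative (concatenation of chord diagrams corresponds to disjoint union of intersection graphs, and $\chi$ is multiplicative over components) and comultiplicative, since $G(D|_I)=G(D)|_I$ and $\chi$ satisfies the classical addition formula $\chi_G(x_1+x_2)=\sum_{V(G)=A\sqcup B}\chi_{G|_A}(x_1)\chi_{G|_B}(x_2)$. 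In principle this reduces the problem to the $\#$-indecomposable (equivalently, connected) chord diagrams, or even to primitives; however, since the recurrences (\ref{eq:rec-X1}) and (\ref{eq:rec-X3}) lower the number of legs and do not preserve chord diagrams (and for a diagram such as $(1,3)(2,4)$ \emph{every} pair of consecutive legs lies in two different chords, so (\ref{eq:rec-X1}) cannot be avoided), the computation is forced outside $\cB$.

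I would therefore prove, by the same double induction as in Theorem~\ref{thm:Xhopf} (on the number of legs, and on the linear order of equivalence classes of~\cite{KKL24}), the following refined identity for a \emph{monotone} permutation $\alpha$: after $p_2=2x$, $p_k=x$ ($k\ge3$, so in particular the odd variables are sent to $x$ as dictated by the Proposition),
\[
Y_0(\alpha)=\varepsilon(\alpha)\,2^{d(\alpha)}\,\chi_{\Gamma(\alpha)}(x),
\]
where $\Gamma(\alpha)$ is the interlacement graph of $\alpha$ (vertices = disjoint cycles; a generalisation of the intersection graph that reduces to $G(D)$ when $\alpha=D$), $d(\alpha)$ is the number of length-two orbits, and $\varepsilon(\alpha)$ is the sign contributed by the negative cycles, normalised so that on a single negative cycle of length $m$ it equals $(-1)^m$. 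The base case — $\alpha$ a single cyclic permutation — is exactly the content of the Remark after the Proposition. Specialising to $\alpha=D$ gives $d(D)=|D|$, $\varepsilon(D)=1$, $\Gamma(D)=G(D)$, which is Theorem~\ref{th-cscd}.

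For the inductive step, pick two consecutive legs $r,r+1$ making all right-hand sides strictly smaller in the linear order (as in the proof of Theorem~\ref{thm:Xhopf}) and run the leading-order recurrences. If $r,r+1$ lie in a common cycle of length $>2$, then $Y_0(\alpha)=Y_0(\alpha')$ by (\ref{eq:rec-X2}) with $\Gamma,d,\varepsilon$ unchanged, so there is nothing to prove. If $(r,r+1)$ is a $2$-orbit it is an isolated vertex of $\Gamma(\alpha)$, so $[\alpha]=[(1,2)]\cdot[\alpha'']$ and multiplicativity together with the base case give the formula, the factor $2x=2\chi_{K_1}(x)$ accounting for one unit of $2^{d}$. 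The essential case is $r,r+1$ in two different cycles $v_1,v_2$, treated with (\ref{eq:rec-X1}): one must show that after the chromatic substitution the six-term relation collapses to the deletion–contraction identity $Y_0(\alpha)=Y_0(\alpha')-Y_0(\alpha'')$, where $\alpha'$ toggles the $v_1v_2$-adjacency in $\Gamma$ and $\alpha''$ merges $v_1,v_2$ (so $\Gamma(\alpha'')=\Gamma(\alpha)/v_1v_2$), which together with $\chi_\Gamma=\chi_{\Gamma\setminus e}-\chi_{\Gamma/e}$ and the induction hypothesis closes the argument — provided one tracks correctly how $d$ and $\varepsilon$ change under a merge (two $2$-orbits may disappear, and cycle signs may flip).

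I expect the real difficulty to be concentrated in that last case: resolving the two \emph{extended}-permutation-graph terms $\gamma_1,\gamma_2$ of (\ref{eq:rec-X1}) into ordinary monotone permutation graphs, which introduces signs $(-1)^{\text{length}}$ sensitive to whether $r$ and $r+1$ belong to the same cycle or to different ones, and then establishing the arithmetic identity $Y_0(\beta_1)-Y_0(\beta_2)+Y_0(\gamma_1)-Y_0(\gamma_2)=-Y_0(\alpha'')$ at the chromatic substitution. This is precisely where the extra factor of $2$ (the choice $p_2=2x$ rather than $p_2=x$) is forced and where the $\fso$ computation genuinely departs from the $\fgl$ argument of~\cite{KKL24}; I would carry it out by reducing $\beta_i,\gamma_i$ further via (\ref{eq:rec-X2})–(\ref{eq:rec-X3}) to cyclic permutations, whose $Y_0$-values are given explicitly by the Proposition. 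Together with the routine check that such legs $r,r+1$ exist and strictly decrease the order, this completes the induction, and Theorem~\ref{th-cscd} follows by specialising to $\alpha=D$.
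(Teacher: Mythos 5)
Your overall strategy is the paper's: you do not prove Theorem~\ref{th-cscd} directly on chord diagrams but generalize to monotone permutations (this is exactly Theorem~\ref{th-csp}, with your $\varepsilon(\alpha)2^{d(\alpha)}\chi_{\Gamma(\alpha)}$ matching the paper's $(-1)^{n(\alpha)}2^{d(\alpha)}\chi$), induct on the number of legs using the recurrences, obtain deletion--contraction from the different-cycles case~(\ref{eq:rec-X1}), and trace the factor $2$ to the case where one of the merged cycles is a length-two orbit, so that two of the four resolvents coincide. The Hopf-algebra preliminary is a correct extra observation, and you rightly note it does not suffice by itself.

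There are, however, two concrete gaps at the step you yourself flag as the crux. First, your induction hypothesis is stated only for monotone $\alpha$, but to dispose of the three (or four) non-contributing terms among $\beta_1,\beta_2,\gamma_1,\gamma_2$ you need the companion statement, carried through the \emph{same} induction, that $Y_0$ vanishes under the chromatic substitution on every \emph{nonmonotone} permutation. Your proposed substitute --- reduce $\beta_i,\gamma_i$ to cyclic permutations via (\ref{eq:rec-X2})--(\ref{eq:rec-X3}) and invoke the Remark --- only works when $c(\alpha)=2$: those recurrences act within a single cycle, while for $c(\alpha)\ge 3$ the resolvents still have several disjoint cycles and any further reduction reintroduces the six-term relation~(\ref{eq:rec-X1}). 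The paper's proof makes the nonmonotone-vanishing clause part of the induction precisely for this reason (the merged cycle $v_1\cup v_2$ in each $\beta_i,\gamma_i$ is nonmonotone, hence those terms die). Second, you assume that transposing $r,r+1$ always toggles the $v_1v_2$-adjacency in $\Gamma$. It need not: for cycles of length $\ge 3$ an adjacent transposition can leave the interlacement unchanged (e.g.\ the pattern $abbaab$ obtained from $ababab$), and in that case the identity to establish is $Y_0(\alpha)=Y_0(\alpha')$ with \emph{all four} extra terms vanishing --- again via the nonmonotone clause --- rather than deletion--contraction. With these two points added (and the sign/monotonicity bookkeeping for which single resolvent survives, which you correctly anticipate but do not carry out), your argument coincides with the paper's proof of Theorem~\ref{th-csp}.
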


We call the substitution $p_2=2x, p_4=p_6=\dots=x$ to the prechromatic
substitution the \emph{chromatic substitution} to the $w_\fso$ weight system. 

\begin{example} For the chord diagram~$K_5$, consisting of~$5$
mutually intersecting chords, we have
    \begin{align*} 
        w_\fso(K_5)=&S_2^5-20 S_0 S_2^4+40 S_2^4+140 S_0^2 S_2^3-560 S_0 S_2^3+800 S_2^3-440 S_0^3 S_2^2+2800 S_0^2 S_2^2\\&-7232 S_0 S_2^2+
        +6784 S_2^2+576 S_0^4 S_2-5408 S_0^3 S_2+19712 S_0^2 S_2-32640 S_0 S_2\\&+80 S_0 S_4 S_2-640 S_4 S_2
        +20480 S_2-384 S_0^2 S_4+3840 S_0 S_4-6144 S_4,
\end{align*}
and the substitutions yield
\begin{align*}
        \frac{1}{32}Y(K_5)(N;2x,x,x,\dots)=&x(x-1)(x-2)(x-3)(x-4)\\
        -&x \left(5 x^4-60 x^3+245 x^2-395 x+206\right)N^{-1}\\
        +&x \left(10 x^4-140 x^3+725 x^2-1404 x+814\right)N^{-2}\\
        -&x (10 x^4-160 x^3+1055 x^2-2511 x+1616 )N^{-3}\\
        +&x \left(5 x^4-90 x^3+740 x^2-2140 x+1496\right) N^{-4}\\
        -&x \left(x^4-20 x^3+200 x^2-688 x+512\right)N^{-5}.
    \end{align*}
\end{example}

The notion of intersection graph of a chord diagram can be extended to
arbitrary permutations~\cite{KKL24}. Namely, the \emph{intersection graph}
of a permutation~$\alpha$ is the graph whose set of vertices is
in one-to-one correspondence with the set $V(\alpha)$ of disjoint cycles in~$\alpha$,
and two vertices are connected by an edge iff the corresponding cycles
interlace (that is, the restriction of~$\alpha$ to this pair of cycles
is not the product of its restrictions to each of the cycles).

Theorem~\ref{th-cscd} admits the following extension to intersection
graphs of monotone permutations.

\begin{theorem}[chromatic substitution for permutations]\label{th-csp}
  The value of the coefficient $Y_0(2x,x,x,\dots)$ of~$N^0$, on a monotone permutation~$\alpha$
(which is the leading term in~$N$) is $(-1)^{n(\alpha)}2^{d(\alpha)}$ times the
chromatic polynomial of the corresponding intersection graph, where
$n(\alpha)$ is the number of negative cycles of odd length in~$\alpha$,
and $d(\alpha)$ is the number of length two orbits (`chords') in~$\alpha$.
\end{theorem}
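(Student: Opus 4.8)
The plan is to compare, for every monotone permutation $\alpha$, the two quantities
\[
\Phi(\alpha):=Y_0(\alpha)\big|_{p_2=2x,\ p_4=p_6=\dots=x},\qquad
\Psi(\alpha):=(-1)^{n(\alpha)}2^{d(\alpha)}\chi_{g(\alpha)}(x),
\]
where $g(\alpha)$ denotes the intersection graph of~$\alpha$, and to prove $\Phi=\Psi$ by a double induction on the number $m$ of legs and on the linear ordering of equivalence classes used in the proof of Theorem~\ref{thm:Xhopf}. Both $\Phi$ and $\Psi$ descend to equivalence classes and are multiplicative under concatenation: for $\Psi$ this holds because $n$ and $d$ are additive, $g(\alpha_1\#\alpha_2)=g(\alpha_1)\sqcup g(\alpha_2)$, and $\chi$ is multiplicative over disjoint unions of graphs. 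Using in addition the identity $\chi_G(x+y)=\sum_{U\sqcup W=V(G)}\chi_{G|_U}(x)\chi_{G|_W}(y)$ and the additivity of $n$ and $d$ with respect to the coproduct~$\mu$, one checks that $\Psi$ is comultiplicative as well; combined with Theorem~\ref{thm:Xhopf} this exhibits both $\Phi$ and $\Psi$ as filtered Hopf algebra homomorphisms $\cA^\pm\to\mathbb{C}[x]$. This structural observation does not by itself finish the proof, since $\cA^\pm$ is not freely generated as an algebra by the classes of its cyclic permutations (for instance the chord diagram $K_5$ is not a concatenation product), so the recurrence has to be used.

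\emph{Reduction to positive permutations.} Given a monotone~$\alpha$, let $\alpha^{+}$ be obtained from $\alpha$ by inverting each of its negative cycles; then $\alpha^{+}$ is positive, with $g(\alpha^{+})=g(\alpha)$ and $d(\alpha^{+})=d(\alpha)$. The cycle-inversion axiom for $w_\fso$ gives $w_\fso(\alpha)=(-1)^{\ell(\alpha)}w_\fso(\alpha^{+})$, hence $Y_0(\alpha)=(-1)^{\ell(\alpha)}Y_0(\alpha^{+})$ and $\Phi(\alpha)=(-1)^{\ell(\alpha)}\Phi(\alpha^{+})$. Since a negative cycle of even length does not change the parity of~$\ell$, we have $\ell(\alpha)\equiv n(\alpha)\pmod 2$, so likewise $\Psi(\alpha)=(-1)^{n(\alpha)}\Psi(\alpha^{+})=(-1)^{\ell(\alpha)}\Psi(\alpha^{+})$. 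Therefore $\Phi(\alpha)=\Psi(\alpha)$ as soon as $\Phi(\alpha^{+})=\Psi(\alpha^{+})$, so it is enough to prove the statement for positive permutations; equivalently, when the induction is carried out inside the monotone class, this congruence is exactly what matches the sign $(-1)^{n(\alpha)}$ of~$\Psi$ against the sign $(-1)^{\ell}$ produced by $w_\fso$ whenever the recurrence inverts a cycle.

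\emph{Base case and inductive step.} If $\gamma\in\S_m$ is cyclic, its intersection graph is a single vertex, so $\Psi(\gamma)=(-1)^{n(\gamma)}2^{d(\gamma)}x$. The Proposition computing $Y_0$ on cyclic permutations, together with the relations (\ref{eq:rec-X2})--(\ref{eq:rec-X3}) used there for the non-standard cyclic classes, gives $\Phi(\gamma)=x$ for $\gamma$ positive of length at least~$3$, $\Phi(\gamma)=(-1)^m x$ for $\gamma$ negative of length $m\ge 3$, and $\Phi(\gamma)=2x$ for the two-cycle; these agree with $\Psi(\gamma)$ once one records that a positive cycle of length at least~$3$ has $d=n=0$, a negative $m$-cycle has $d=0$ and $n\equiv m\pmod 2$, and a chord has $d=1,\ n=0$. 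If $\alpha$ is not cyclic, choose neighboring legs $r,r+1$ so that the permutations on the right-hand side of the applicable recurrence are strictly smaller in the induction order, exactly as in the proof of Theorem~\ref{thm:Xhopf}. Passing to the leading coefficient in~$N$ turns (\ref{eq:rec-X1}) into $Y_0(\alpha)=Y_0(\alpha')+Y_0(\beta_1)-Y_0(\beta_2)+Y_0(\gamma_1)-Y_0(\gamma_2)$, turns (\ref{eq:rec-X2}) into $Y_0(\alpha)=Y_0(\alpha')$, and turns (\ref{eq:rec-X3}) into $Y_0(\alpha)=Y_0(\alpha')-Y_0(\gamma_2)$; specializing $p_2=2x,\ p_k=x$ gives the same relations for~$\Phi$, and it remains to check that $\Psi$ satisfies them. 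For (\ref{eq:rec-X2}) this reduces to $g(\alpha)\cong g(\alpha')$ with $n$ and $d$ unchanged (swapping two consecutively labeled legs of a cycle that are not cyclically adjacent alters neither the interlacement pattern nor the type of the cycle). For (\ref{eq:rec-X1}) and (\ref{eq:rec-X3}) one tracks the effect on $g$, on~$n$ and on~$d$ of merging two cycles, of shortening a cycle, and---crucially---of resolving the two extended-permutation-graph terms $\gamma_1,\gamma_2$ into ordinary (in general negative) permutation graphs: the induced moves on $g$ reproduce the deletion--contraction recursion for~$\chi$, just as in the $\fgl$ computation of~\cite{KKL24} and in Theorem~\ref{th-cscd}; the signs introduced by the cycle inversions inside $\gamma_1,\gamma_2$ are absorbed into $(-1)^{n}$ through $\ell\equiv n\pmod 2$; and the asymmetry of the chromatic substitution ($p_2=2x$ versus $p_k=x$) supplies precisely the factor~$2^{d}$ whenever a length-two orbit is created or destroyed. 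Invoking the induction hypothesis then yields $\Psi(\alpha)=\Phi(\alpha)$, which closes the induction.

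The main obstacle is this last verification. In the $\fgl$ setting of~\cite{KKL24} only the first three graphs on the right of the recurrence occur and the argument is a direct transcription of deletion--contraction; here the two extended-permutation-graph terms must first be resolved, and their resolution can turn positive cycles into negative ones---which is exactly why \emph{monotone}, rather than merely positive, is the right hypothesis, the monotone permutations forming the smallest class stable under all moves invoked by the recurrence. Keeping careful track of the signs $(-1)^{\mathrm{length}}$ created in that resolution, of their compatibility with the constant $(-1)^{n(\alpha)}$, and of the powers of~$2$ attached to the length-two orbits is the delicate point of the proof.
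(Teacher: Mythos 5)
Your overall architecture---induction on the number of legs, application of the recurrence at a well-chosen pair $r,r+1$, and matching the result against deletion--contraction for the chromatic polynomial---is the same as the paper's. But the induction as you have set it up cannot close, because your inductive hypothesis only asserts $\Phi=\Psi$ on \emph{monotone} permutations, while the four ``merge'' terms $\beta_1,\beta_2,\gamma_1,\gamma_2$ produced by recurrence~(\ref{eq:rec-X1}) are in general \emph{not} monotone: merging the two cycles $v_1,v_2$ usually creates a non-monotone cycle, so your parenthetical claim that the monotone permutations form a class ``stable under all moves invoked by the recurrence'' is not correct. The paper therefore carries a second assertion through the induction: \emph{for a non-monotone permutation the coefficient of $N^0$ in the chromatic substitution vanishes}. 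This vanishing is what kills all four merge terms when $v_1$ and $v_2$ interlace neither in $\alpha$ nor in $\alpha'$ (so that $g(\alpha)\cong g(\alpha')$ and the recurrence must reduce to $\Phi(\alpha)=\Phi(\alpha')$), and what kills three of the four in the edge-toggling case. Without this auxiliary statement you have no way to evaluate $\Phi(\beta_i)$ and $\Phi(\gamma_i)$, and the identity you need simply does not follow.

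Second, the step you yourself flag as ``the delicate point'' is precisely the substance of the proof and is left unverified. The paper's resolution is: when $v_1$ and $v_2$ carry the same sign, exactly one of $\beta_1,\beta_2$ is monotone (which one depends on whether the toggle deletes or creates the edge); when they carry opposite signs, the monotone term sits among $\gamma_1,\gamma_2$, and resolving that extended graph into an ordinary permutation graph produces the sign that is absorbed into $(-1)^{n(\alpha)}$; and when $v_1$ or $v_2$ is a chord (hence both positive and negative), \emph{two} of the four terms are monotone and identical. This doubling---not the substitution $p_2=2x$ by itself, which only accounts for the value $2x$ on an isolated chord---is what keeps the factor $2^{d}$ consistent when a contraction destroys a length-two orbit. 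Your reduction to positive permutations via cycle inversion and the Hopf-algebra observations are sound but peripheral; they do not replace the vanishing lemma or the case analysis above.
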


The proof of the theorem is given in the next section.

\subsection{Proof of Theorem~\ref{th-csp}}
The proof of the fact that the leading term in $N$ of the chromatic substitution
on monotone permutations coincides with the chromatic polynomial of their intersection graphs splits into two steps. First, we need to prove that this leading
term depends on the intersection graph only. We show this by proving that
it satisfies the deletion-contraction relation.  In addition, we prove that for a
nonmonotone permutation $\alpha$, the free term in the chromatic substitution is $0$.
The proof proceeds by induction on the number $m$ of permuted elements,
Suppose we have already proved the result for all permutations of at
most $m-1$ elements. For the cyclic positive permutation $\alpha \in \S_m$, the assertions
are also valid, as well as for the negative cyclic permutation.

Let $\alpha\in \S_m$ is a permutation with at least $2$ cycles. Let $v_1$ 
be the cycle in~$\alpha$ containing the element $1$. Let $r+1$ be the smallest number in $v_1$ such that $r$ in not in $v_1$. Denote~$v_2$ the cycle containing $r$.

Apply recurrence (\ref{eq:rec-X1}) to the pair of points $r, r+1$ and then apply the chromatic
substitution to all the resulting terms. 
$$
w_\fso(\alpha)=w_\fso(\alpha')+w_\fso(\beta_1)-w_\fso(\beta_2)+w_\fso(\gamma_1)-w_\fso(\gamma_2).
$$
There are two possibilities:
\begin{itemize}
    \item  the intersection graph of the permutation $\alpha$ and the intersection
graph of the permutation $\alpha'$ are isomorphic. In this case $\beta_1,\beta_2,\gamma_1,\gamma_2$ are permutations of $m-1$ elements containing $c(\alpha)-1$ disjoint cycles.  each of these four permutations contains a nonmonotone
disjoint cycle, the one obtained by merging the cycles $v_1, v_2$ of $\alpha$.
$\beta_1,\beta_2,\gamma_1,\gamma_2$ are not monotone. So the right four terms are $0$.
    \item these two intersection graphs are distinct, the difference between the two
consists in deleting or adding the edge connecting $v_1$ and $v_2$. There is 
exactly one monotone permutation among $\beta_1,\beta_2,\gamma_1,\gamma_2$. 
For example, if $v_1$ and $v_2$ are both positive or negative and 
interlaced in $\alpha'$, but not in $\alpha$, then only $\beta_1$ is monotone,
while $\beta_2,\gamma_1,\gamma_2$ are not. 
Exchanging interlacement in $\alpha'$ and $\alpha$, one switches the roles of the permutations $\beta_1$ to $\beta_2$. And if $v_1$ and $v_2$ have different signs,
then one replaces $\beta$ with $\gamma$. A special case is when
$v_1$ or $v_2$ is both positive and negative,  which means it is a chord. In this case one gets TWO same monotone permutations among $\beta_1,\beta_2,\gamma_1,\gamma_2$.    Hence it becomes the contraction-deletion relation for the chromatic polynomial.
This proves that the coefficient is the chromatic polynomial.

\end{itemize}
In the case the permutation $\alpha$ is not monotone, all the other permutations
 are not monotone as well. Hence, by the induction hypothesis, none of
the other permutations contributes to the coefficient of $N^{m-c(\alpha)}$
in the chromatic substitution, whence the same is true for the permutation $\alpha'$.

\end{document}